\newtheorem{thm}{Theorem}[section]
\newtheorem{prop}[thm]{Proposition}
\newtheorem{defn}[thm]{Definition}
\numberwithin{equation}{section}
\begin{document}

\title{Pre-derivations and description of non-strongly nilpotent filiform Leibniz algebras }
\author{K.K. Abdurasulov, A.Kh. Khudoyberdiyev, M. Ladra and A.M. Sattarov}
\address{[A.Kh. Khudoyberdiyev]
National University of Uzbekistan, Institute of Mathematics
Academy of Sciences of Uzbekistan, Tashkent, 100170, Uzbekistan.}
\email{khabror@mail.ru}
\address{[K.K. Abdurasulov -- A.M. Sattarov] Institute of Mathematics Academy of Sciences of Uzbekistan, Tashkent, 100170, Uzbekistan.}
\email{abdurasulov0505@mail.ru --- saloberdi90@mail.ru}
\address{[M. Ladra] Department of Mathematics, Institute of Mathematics,  University of Santiago de Compostela, 15782, Spain.}
\email {manuel.ladra@usc.es}

\subjclass[2010]{17A32, 17A36, 17B30}

\keywords{Lie algebra, Leibniz algebra, derivation,
pre-derivation, nilpotency, characteristically nilpotent algebra,
strongly nilpotent algebra}

\begin{abstract}
In this paper we investigate pre-derivations of filiform Leibniz
algebras. Recall that the set of filiform Leibniz algebras of
fixed dimension can be decomposed into three non-intersected families.
We describe the pre-derivation of filiform Leibniz algebras for the first and second families. We
found sufficient conditions under which filiform Leibniz algebras
are strongly nilpotent. Moreover, for the first and second
families, we give the description of characteristically nilpotent
algebras which are non-strongly nilpotent.
\end{abstract}

\maketitle


\section{Introduction}

The notion of Leibniz algebra was introduced in \cite{Lod} as
a non-antisymmetric generalization of Lie algebras. During the
last 25 years, the theory of Leibniz algebras has been actively
studied and many results of the theory of Lie algebras have been
extended to Leibniz algebras. Since the study of derivations and
automorphisms of a Lie algebra plays an essential role in the
structure theory of algebras, it is a natural question whether
the corresponding results for Lie algebras can be extended to more
general objects.

In 1955, Jacobson \cite{Jac2} proved that a Lie algebra over a
field of characteristic zero admitting a non-singular (invertible)
derivation is nilpotent. However, the related problem of whether the inverse of this
statement is correct or not, remained open until the work of Dixmier and
Lister \cite{Dix}, where an example of a nilpotent Lie algebra,
whose all derivations are nilpotent (and hence, singular), was
constructed. Such types of Lie algebras are called
characteristically nilpotent Lie algebras. The papers \cite{CaNu,
Kha1} and others are devoted to the investigation of
characteristically nilpotent Lie algebras.

An analogue of Jacobson's result for Leibniz algebras was
proved in \cite{LaRiTu}. Moreover, it was shown that, similarly to
the case of Lie algebras, the inverse of this statement does not
hold and the notion of characteristically nilpotent Lie algebra
can be extended to Leibniz algebras \cite{KhLO, Omi1}. The study
of derivations of Lie algebras led to the appearance of a natural
generalization: pre-derivations of Lie algebras \cite{Mul};
the pre-derivations arise as the Lie algebra corresponding to the group of the pre-automorphisms of a Lie algebra.
 A  pre-derivation of a Lie algebra is just a
derivation of the Lie triple system induced by it. Research on pre-derivations has been related to  Lie algebra degenerations, Lie triple systems
and bi-invariant semi-Riemannian metrics on Lie groups \cite{Burd}. In
\cite{Baj} it was proved that Jacobson's result is also true in
terms of pre-derivations. Similarly, likewise the example of Dixmier and
Lister, several examples of nilpotent Lie algebras whose
pre-derivations are nilpotent were presented in \cite{Burd}. Such
Lie algebras are called strongly nilpotent.

In \cite{Moens}, a generalized notion of derivations and
pre-derivations of Lie algebras is given. These derivations are defined as Leibniz-derivations of
order $k$, and Moens proved that a finite-dimensional Lie algebra over a field of characteristic zero is nilpotent if and only if it admits an invertible Leibniz-derivation. Further, an analogue of Moens’ result was showed for alternative, Jordan, Zinbiel, Malcev and Leibniz algebras \cite{FKhO,Kayg1,Kayg2}.

Since a Leibniz-derivation of order $3$ of a Leibniz algebra is a
pre-derivation, it is natural to define the notion of strongly
nilpotent Leibniz algebras. It should be noted that the class of
strongly nilpotent Leibniz algebras is a subclass of
characteristically nilpotent Leibniz algebras. Thus, one of the
approaches to the classification of nilpotent Leibniz algebras
considers a subclass of Leibniz algebras, in which any Leibniz derivation
of order $k$ is nilpotent and any algebra admits a non-nilpotent
Leibniz-derivation of order $k+1$. In the case of $k=1$ we have
the class of non-characteristically nilpotent Leibniz algebras.
Such class of filiform Leibniz algebras was classified in
\cite{KhLO}.

This paper is devoted to the study of algebras for the case $k=2$, i.e. the class of characteristically
nilpotent filiform Leibniz algebras which are non-strongly
nilpotent. It is known that the class of all filiform Leibniz
algebras is split into three non-intersected families
\cite{AyOm2,GoOm}, where one of the families contains filiform Lie
algebras and the other two families come out from naturally graded
non-Lie filiform Leibniz algebras. An isomorphism criterion for
these two families of filiform Leibniz algebras was given in
\cite{GoOm}.  Note that some classes of finite-dimensional
filiform Leibniz algebras and algebras up to dimension less than
$10$ were classified in \cite{AAOK, GoJiKh,OmRa,RaSo}.

In order to achieve our goal, we have organized the paper as
follows. In Section~\ref{S:prel}, we present necessary definitions
and results that will be used in the rest of the paper.
In Section~\ref{S1} we describe pre-derivations of filiform Leibniz algebras of the first and second families.
 Finally, in Section~\ref{S2}, we give the description of
 characteristically nilpotent filiform Leibniz
algebras which are non-strongly nilpotent.

Throughout the paper, all the spaces and algebras are assumed to be
finite-dimensional.

\section{Preliminaries}\label{S:prel}

In this section we give necessary definitions and preliminary
results.

\begin{defn} An algebra $(L,[-,-])$ over a field $F$ is called a (right) Leibniz algebra if for any $x,y,z\in L$, the so-called Leibniz identity
\[ \big[[x,y],z\big]=\big[[x,z],y\big]+\big[x,[y,z]\big] \] holds.
\end{defn}

Every Lie algebra is a Leibniz algebra, but the bracket in a
Leibniz algebra does not need to be skew-symmetric.

Note that a derivation of Leibniz algebra $L$ is a
 linear transformation, such that
\[d([x,y])=[d(x),y]+[x, d(y)],\]
 for any $x, y\in L$.

Pre-derivations of Leibniz algebras are a generalization of
derivations which defined as follows.

\begin{defn}
A linear transformation $P$ of the Leibniz algebra $L$ is called a
pre-derivation if for any $x, y, z\in L$,
\[P([[x,y],z])=[[P(x),y],z]+[[x,P(y)],z]+[[x,y],P(z)].\]
\end{defn}

For the given Leibniz algebra $L$ we consider the following central
lower series:
\[
L^1=L,\qquad L^{k+1}=[L^k,L^1] \qquad k \geq 1.
\]

\begin{defn} A Leibniz algebra $L$ is called
nilpotent if there exists  $s\in\mathbb N $ such that $L^s=0$.
\end{defn}

A nilpotent Leibniz algebra is called \emph{characteristically
nilpotent} if all its derivations are nilpotent. We say that a
Leibniz algebra is  \emph{strongly nilpotent} if any
pre-derivation is nilpotent.

Since any derivation of the Leibniz algebra is a pre-derivation,
it implies that a strongly nilpotent Leibniz algebra is
characteristically nilpotent. An example of characteristically
nilpotent, but non-strongly nilpotent Leibniz algebra could be
find in \cite{FKhO, KhLO, Omi1}.

\begin{defn} A Leibniz algebra $L$ is said to be filiform if
$\dim L^i=n-i$, where $n=\dim L$ and $2\leq i \leq n$.
\end{defn}

The following theorem decomposes  all $n$-dimensional filiform
Leibniz algebras into  three families.

\begin{thm}[\cite{AyOm2,GoOm}]
 Any $n$-dimensional complex filiform Leibniz algebra admits a basis $\{e_1, e_2, \dots, e_n\}$
 such that the table of multiplication of the algebra has one of the following forms:

$F_1(\alpha_4, \dots, \alpha_n,\theta)=\left\{\begin{array}{ll}
[e_1,e_1]=e_3, \\[1mm] [e_i,e_1]=e_{i+1},& 2\leq i \leq n-1,\\[1mm]
[e_1,e_2]=\sum\limits_{t=4}^{n-1}\alpha_te_t+\theta e_n,&\\[1mm]
[e_j,e_2]=\sum\limits_{t=j+2}^{n}\alpha_{t-j+2}e_t,& 2\leq j\leq n-2.\\[1mm]
\end{array}\right.$ \\[1mm]

$F_2(\beta_4, \dots, \beta_n,\gamma)=\left\{\begin{array}{ll}
[e_1,e_1]=e_{3}, \\[1mm]
[e_i,e_1]=e_{i+1}, & \  3\leq i \leq {n-1},\\[1mm]
[e_1,e_2]=\sum\limits_{t=4}^{n}\beta_te_{t}, \\[1mm]
[e_2,e_2]= \gamma e_{n},\\[1mm]
[e_j,e_2]=\sum\limits_{t=j+2}^{n}\beta_{t-j+2}e_t, & \ 3\leq j
\leq {n-2},
\end{array} \right.$ \\

$F_3(\theta_1,\theta_2,\theta_3)= \left\{\begin{array}{lll}
[e_i,e_1]=e_{i+1}, &
2\leq i \leq {n-1},\\[1mm]
[e_1,e_i]=-e_{i+1}, & 3\leq i \leq {n-1}, \\[1mm]
[e_1,e_1]=\theta_1e_n, &   \\[1mm]
[e_1,e_2]=-e_3+\theta_2e_n, & \\[1mm]
[e_2,e_2]=\theta_3e_n, &  \\[1mm]
[e_i,e_j]=-[e_j,e_i] \in \langle e_{i+j+1}, e_{i+j+2}, \dots ,
e_n\rangle, &
2\leq i < j \leq {n-1},\\[1mm]
[e_i,e_{n+1-i}]=-[e_{n+1-i},e_i]=\alpha (-1)^{i+1}e_n, & 2\leq
i\leq n-1,
\end{array} \right.$

\noindent where all omitted products are equal to zero and
$\alpha\in\{0,1\}$ for even $n$ and $\alpha=0$ for odd $n$.
\end{thm}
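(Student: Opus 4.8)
The statement is a structural classification, so the plan is to fix a basis adapted to the lower central series, reduce to a common skeleton, split into cases, and normalize the remaining structure constants.

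\textbf{Step 1: an adapted basis.} Let $L$ be an $n$-dimensional filiform Leibniz algebra. From $\dim L^i = n-i$ we read off $\dim(L/L^2) = 2$ and $\dim(L^i/L^{i+1}) = 1$ for $2 \le i \le n-1$. First I would prove the preliminary fact that some right multiplication operator $R_x$ with $x \in L \setminus L^2$ has a Jordan block of size $n-1$: since $[L,x] \subseteq L^2$ one gets $R_x^k(L) \subseteq L^{k+1}$, so $n-1$ is the maximal possible block size, and filiformness forces it to be attained. Fix such an $x$, call it $e_1$, build a Jordan chain $e_3, e_4, \ldots, e_n$ for $R_{e_1}$ inside $L^2$ so that $[e_i, e_1] = e_{i+1}$ and $[e_n, e_1] = 0$, and complete $\{e_1, e_2\}$ to a complement of $L^2$ with $e_2$ chosen compatibly with the flag and with $\ker R_{e_1}$. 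This yields the skeleton $[e_i, e_1] = e_{i+1}$ that all three families share.

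\textbf{Step 2: the trichotomy.} The bracket induces a bilinear form $\omega \colon (L/L^2) \times (L/L^2) \to L^2/L^3 \cong F$, and $\omega \neq 0$ by filiformness. The split is governed by the symmetric part of $\omega$. If it vanishes, i.e. $[x,x] \in L^3$ for every $x$, then $L$ is ``Lie-like at the top'': $\omega$ antisymmetric and non-zero forces $[e_1,e_2] \equiv -[e_2,e_1] \pmod{L^3}$ and one is driven into the family $F_3$, which contains the filiform Lie algebras as the subcase $\theta_1 = \theta_2 = \theta_3 = 0$. If the symmetric part of $\omega$ is non-zero, one may choose the generator with $[e_1,e_1] = e_3$; the remaining freedom $e_2 \mapsto e_2 + c e_1$ kills $\omega(e_2,e_1)$, and the families $F_1$ and $F_2$ correspond to whether the second generator can be taken inside the long $R_{e_1}$-chain (so $[e_2,e_1] = e_3$: this is $F_1$) or is forced to be the short-block element (so $[e_2,e_1] = 0$ and the residual symmetric datum migrates to $[e_2,e_2] = \gamma e_n$: this is $F_2$). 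That this dichotomy is well defined, and that the three cases are exhaustive and pairwise disjoint at the level of normal forms, is the heart of the argument; it uses a level-$\ge 3$ refinement of $\omega$ together with the remaining flag-preserving freedom.

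\textbf{Step 3: normalizing the parameters.} Within each family I would feed the skeleton into the Leibniz identity $[[a,b],c] = [[a,c],b] + [a,[b,c]]$. Applied to the triple $(e_j, e_2, e_1)$ it relates $[e_{j+1}, e_2]$ to $[[e_j,e_2],e_1]$ and the correction $[e_j,[e_2,e_1]]$, giving a downward recursion that expresses every product $[e_j, e_2]$ through a single parameter sequence $\alpha_4, \ldots, \alpha_n$ (respectively $\beta_4, \ldots, \beta_n$) and produces the closed formulas $[e_j,e_2] = \sum_{t \ge j+2} \alpha_{t-j+2} e_t$. The only coefficients not pinned down by this recursion live in the top products: $\theta$ in $[e_1,e_2]$ for $F_1$; $\gamma$ in $[e_2,e_2]$ for $F_2$; and $\theta_1, \theta_2, \theta_3$ together with the skew products $[e_i,e_j] \in \langle e_{i+j+1}, \ldots, e_n\rangle$ for $F_3$. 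One then exhausts the residual basis changes preserving the adapted flag — transformations $e_1 \mapsto e_1 + u$, $e_2 \mapsto \mu e_2 + v$ with $u, v \in L^2$, together with flag-compatible rescalings of $e_3, \ldots, e_n$ — to clear all non-canonical constants. In $F_3$ this last step is where the coefficient $\alpha$ of $[e_i, e_{n+1-i}]$ is rescaled into $\{0,1\}$ when $n$ is even, while when $n$ is odd the alternating sign pattern $(-1)^{i+1}$ is incompatible with the remaining Leibniz constraints and forces $\alpha = 0$.

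\textbf{Main obstacle.} The routine part is the propagation of the Leibniz identity over basis triples and the flag-preserving normalizations in Step 3. The genuinely delicate points are in Step 2: proving that the long Jordan chain of a maximal right multiplication can always be completed to an adapted basis of exactly one of the three prescribed shapes (so the families are exhaustive and pairwise non-overlapping at the level of normal forms), and the parity argument for $\alpha$ in $F_3$, which needs the skew products $[e_i,e_j]$ tracked through the Leibniz identity carefully enough to expose the obstruction in the odd-dimensional case.
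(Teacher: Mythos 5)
First, a point of reference: the paper does not prove this theorem at all --- it is imported from \cite{AyOm2,GoOm} as a known classification --- so there is no internal proof to compare against, and your outline has to stand on its own. Its overall strategy (adapted basis along the lower central series, a trichotomy read off from the top of the bracket, propagation by the Leibniz identity) is indeed the strategy of those references, and Step 3 is essentially sound: once one knows $[e_i,e_j]=0$ for $j\geq 3$ in the non-Lie families, the identity $[[e_j,e_1],e_2]=[[e_j,e_2],e_1]+[e_j,[e_1,e_2]]$ reduces to $[e_{j+1},e_2]=[[e_j,e_2],e_1]$ and yields the shift formula $[e_j,e_2]=\sum_{t\geq j+2}\alpha_{t-j+2}e_t$; likewise the existence of a generator whose right multiplication has a Jordan block of size $n-1$ is a standard genericity lemma over $\mathbb{C}$.

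The genuine gap is in Step 2, and it is not just an omitted verification. Your criterion for separating $F_1$ from $F_2$ --- ``whether the second generator can be taken inside the long $R_{e_1}$-chain, i.e.\ $[e_2,e_1]=e_3$ versus $[e_2,e_1]=0$'' --- is not basis-independent and therefore cannot be the dichotomy: inside $F_1$ itself, replacing $e_2$ by $e_2'=e_2-e_1$ gives $[e_2',e_1]=0$, yet the algebra does not acquire the $F_2$ shape (now $[e_1,e_2']$ has a nonzero $e_3$-component and $[e_j,e_2']$ a nonzero $e_{j+1}$-component). The invariant that actually separates the two non-Lie families is finer; for instance, writing $\omega$ for the induced form on $L/L^2$ with values in $L^2/L^3$, one has $\omega$ nondegenerate for $F_3$, degenerate with equal left and right radicals for $F_2$, and degenerate with distinct left and right radicals for $F_1$ --- equivalently, the family is governed by which naturally graded filiform Leibniz algebra $\operatorname{gr}(L)$ is. Your ``level-$\geq 3$ refinement of $\omega$'' is precisely this missing content, and it is never supplied, so exhaustiveness and disjointness are asserted rather than proved. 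Two further points are left untreated: the $F_3$ normal form's claim $[e_i,e_j]\in\langle e_{i+j+1},\dots,e_n\rangle$ for $2\leq i<j$ is the Vergne-type statement for the underlying graded Lie structure and needs its own argument; and the restriction on $\alpha$ is not some subtle Leibniz obstruction but follows at once from imposing $[e_i,e_{n+1-i}]=-[e_{n+1-i},e_i]$ on the expression $\alpha(-1)^{i+1}e_n$, which forces $(-1)^n=1$ or $\alpha=0$.
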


It is easy to see that algebras of the first and the second
families are non-Lie algebras. Note that if $(\theta_1, \theta_2,
\theta_3) = (0,0,0)$, then an algebra of the third class is a Lie
algebra.

Further we shall need the  notion of Catalan numbers. The
 $p$-th Catalan numbers were defined
in \cite{HiPe} by the formula  \[C^{p}_{n} = \frac {1} {(p-1)n+1}
\ \binom{pn}{n}\,.\]

It should be noted that for the  $p$-th Catalan numbers the
following equality is hold:
\begin{equation} \label{E:comb}
\sum\limits_{k=1}^{n} C_k^p C_{n-k}^p = \frac {2n}
{(p-1)n+p+1}C_{n+1}^p\,.
\end{equation}

\section{Pre-derivations of filiform Leibniz algebras}\label{S1}

In this section we give the description of pre-derivations of
filiform Leibniz algebras. First, we consider the filiform Leibniz
algebras from the first family.

Let $L$ be a $n$-dimensional filiform Leibniz algebra from the
family $F_1(\alpha_4, \dots, \alpha_n, \theta )$.
\begin{prop}\label{prop1}
The pre-derivations of the filiform Leibniz algebras from the
family $F_1(\alpha_4,\alpha_5,\ldots, \alpha_{n}, \theta)$ have
the following form:
\begin{align*}
P(e_1)&=\sum\limits_{t=1}^{n}a_te_t,\quad
P(e_2)=(a_1+a_2)e_2+\sum\limits_{t=3}^{n-2}a_te_t+b_{n-1}e_{n-1}+b_ne_n,
\quad P(e_3)=\sum\limits_{t=2}^{n}c_te_t,\\
P(e_{2i})&=((2i-1)a_1+a_2)e_{2i}+\sum\limits_{t=2i+1}^{n}(a_{t-2i+2}+(2i-2)a_2\alpha_{t-2i+3})e_t,\qquad \qquad \qquad  2\leq i\leq \left\lfloor\frac{n}{2}\right\rfloor,\\
P(e_{2i+1})&=c_2e_{2i}+((2i-2)a_1+c_3)e_{2i+1}+\sum\limits_{t=2i+2}^{n}(c_{t-2i+2}+(2i-2)a_2\alpha_{t-2i+2})e_t,\qquad 2\leq i\leq \left\lfloor\frac{n-1}{2}\right\rfloor,
\end{align*}
where $\lfloor a\rfloor$ is the integer part of the real number $a$ and
\begin{equation}\label{eq0}\left\{\begin{array}{ll}
(1+(-1)^n)c_2=0,\quad c_2\alpha_t=0,& 4\leq t\leq n-1, \\[1mm]
(a_1-a_2)\alpha_4=0,\quad  (3a_1-c_3)\alpha_4=0,\\[1mm]
\sum\limits_{t=3}^{k}(a_{2k-2t+3}-c_{2k-2t+4}+a_2\alpha_{2k-2t+4})\alpha_{2t-2}=0,&3\leq k\leq \lfloor\frac{n-1}{2}\rfloor,\\[1mm]
(2a_1+a_2-c_3)\alpha_{2k}+\sum\limits_{t=3}^{k}(a_{2k-2t+4}-c_{2k-2t+5}+a_2\alpha_{2k-2t+5})\alpha_{2t-2}=0,& 3\leq k\leq \lfloor\frac{n}{2}\rfloor-1,\\[1mm]
(a_2-(k-3)a_1)\alpha_{k}=\frac{k-1}{2}a_2\sum\limits_{t=5}^{k}\alpha_{t-1}\alpha_{k-t+4},& 5\leq k\leq n-2,\\[1mm]
(a_2-(n-4)a_1)\alpha_{n-1}=a_2\sum\limits_{t=3}^{\frac{n-2}{2}}(2t-3)\alpha_{n-2t+3}\alpha_{2t-1}\\[1mm]
\quad\quad\quad\quad\quad\quad\quad\quad\quad
+\sum\limits_{t=2}^{\frac{n-2}{2}}(c_{n-2t+2}-a_{n-2t+1}+(2t-3)a_2\alpha_{n-2t+2})\alpha_{2t},&
n\quad  \text {is even},
\\[1mm]
(2a_2-c_3-(n-6)a_1)\alpha_{n-1}=a_2\sum\limits_{t=3}^{\frac{n-1}{2}}(2t-3)\alpha_{n-2t+3}\alpha_{2t-1}
+\\[1mm]
\quad\quad\quad\quad\quad\quad\quad\quad\quad
+\sum\limits_{t=2}^{\frac{n-3}{2}}(c_{n-2t+2}-a_{n-2t+1}+(2t-3)a_2\alpha_{n-2t+2})\alpha_{2t},&
n\quad \text {is odd.}
\end{array}\right.
\end{equation}
\end{prop}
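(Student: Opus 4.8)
The plan is to determine the linear transformation $P$ completely by imposing the pre-derivation identity $P([[x,y],z])=[[P(x),y],z]+[[x,P(y)],z]+[[x,y],P(z)]$ on a small set of strategically chosen basis triples, then propagate the consequences through the whole algebra. I would start by writing $P(e_1)=\sum_{t=1}^n a_t e_t$ and $P(e_2)=\sum_{t=1}^n b_t e_t$ with undetermined coefficients, and similarly $P(e_3)=\sum_t c_t e_t$, since $e_3=[e_1,e_1]$ will already tie $P(e_3)$ to $P(e_1)$. The key structural observation is that in $F_1$ every basis vector $e_k$ for $k\geq 3$ is a product: $e_{i+1}=[e_i,e_1]$, so iterating, $e_k$ is obtained from $e_1$ and $e_2$ by repeated bracketing with $e_1$. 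Hence I would first compute $P$ on the "long words" $[[e_i,e_1],e_1]$, $[[e_i,e_1],e_2]$, etc., expressing $P(e_k)$ recursively in terms of $P(e_{k-1})$, $P(e_{k-2})$ and the structure constants $\alpha_t,\theta$.

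Concretely, the first batch of identities I would extract: apply the identity to $(e_1,e_1,e_1)$ to get $P(e_4)$ in terms of $P(e_1),P(e_3)$; to $(e_2,e_1,e_1)$ and $(e_1,e_2,e_1)$ to relate $P(e_2)$ and $P(e_4)$; crucially the triple $(e_1,e_1,e_2)$ and $(e_2,e_1,e_2)$, which involve the $\alpha_t$-products, will force the vanishing conditions linking $a_1,a_2,c_3$ and the $\alpha$'s. I expect two parallel recursions to emerge — one along even-indexed basis vectors $e_{2i}$ and one along odd-indexed $e_{2i+1}$ — because $e_3=[e_1,e_1]$ starts an independent odd chain whose leading coefficient is governed by $c_2,c_3$ rather than $a_1,a_2$. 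Solving these two linear recurrences with the "seed" data from $P(e_1),P(e_2),P(e_3)$ should yield exactly the closed forms stated for $P(e_{2i})$ and $P(e_{2i+1})$, with the characteristic eigenvalue patterns $(2i-1)a_1+a_2$ and $(2i-2)a_1+c_3$ appearing as the coefficients of the diagonal terms, and the $\sum a_2\alpha$ corrections appearing from the inhomogeneous part of the recursion driven by $[e_j,e_2]$.

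The constraint system \eqref{eq0} is then obtained as the consistency (compatibility) conditions: the same basis vector $e_k$ can be reached by more than one chain of brackets (e.g. $[[e_i,e_2],e_1]$ versus $[[e_i,e_1],e_2]$ via the Leibniz identity), and requiring the two resulting expressions for $P(e_k)$ to agree produces the quadratic-in-$\alpha$ relations; the conditions $(1+(-1)^n)c_2=0$ and $c_2\alpha_t=0$ come from the extreme case where the odd chain would overshoot dimension $n$ or collide with an $\alpha$-product. I would organize this by a downward induction on the "depth" of $e_k$, checking at each step that no new constraint on the free parameters $a_1,\dots,a_n,b_{n-1},b_n,c_2,\dots,c_n$ arises beyond those listed, and that every listed relation is genuinely forced. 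The main obstacle I anticipate is bookkeeping: carefully tracking the inhomogeneous $\alpha$-terms through the iterated brackets so that the sums $\sum_{t=3}^k(\cdots)\alpha_{2t-2}$ and the parity-dependent tail conditions for $\alpha_{n-1}$ come out with the exact coefficients $(2t-3)$, $\frac{k-1}{2}$, etc. — this requires a disciplined use of the combinatorial identity \eqref{E:comb} for the Catalan-type sums that show up when one fully expands a word of the form $[\,\cdots[[e_2,e_2],e_2],\dots]$, and getting the even/odd $n$ split right in the final two equations of \eqref{eq0}. Once all compatibility conditions are shown to reduce to \eqref{eq0}, the converse — that any $P$ of the stated form with coefficients satisfying \eqref{eq0} is genuinely a pre-derivation — follows by reversing the computation, i.e. verifying the identity on a spanning set of triples, which is now routine since all the obstructions have been accounted for.
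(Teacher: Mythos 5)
Your plan is essentially the paper's own proof: set $P(e_1),P(e_2),P(e_3)$ with undetermined coefficients, determine $P(e_k)$ recursively from the triples $(e_j,e_1,e_1)$ (splitting into even and odd chains seeded by $a_1,a_2$ and $c_2,c_3$), and then extract the constraint system \eqref{eq0} as compatibility conditions from the triples $(e_1,e_1,e_3)$, $(e_1,e_1,e_2)$, $(e_3,e_1,e_2)$ and the remaining $(e_i,e_1,e_2)$, $(e_i,e_2,e_2)$, exactly as the authors do. One minor correction: the Catalan identity \eqref{E:comb} plays no role at this stage --- the constraints in \eqref{eq0} are left as raw quadratic relations in the $\alpha_t$, and \eqref{E:comb} is only invoked later (Theorems \ref{thm41} and \ref{thm42}) when those recurrences are actually solved.
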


\begin{proof}
Let $L$ be a filiform Leibniz algebra from the family
$F_1(\alpha_4,\alpha_5,\ldots, \alpha_{n}, \theta)$ and let $P \colon  L
\rightarrow L$ be a pre-derivation of $L$. Put
\[P(e_1)=\sum\limits_{t=1}^{n}a_te_t, \quad  P(e_2)=\sum\limits_{t=1}^{n}b_te_t, \quad P(e_3)=\sum\limits_{t=1}^{n}c_te_t.\]

From
\begin{align*}
0& =P([[e_1,e_1],e_3])=[[P(e_1),e_1],e_3]+[[e_1,P(e_1)],e_3]+[[e_1,e_1],P(e_3)]\\
& = [e_3, \sum\limits_{t=1}^{n}c_te_t] = c_1e_4 +
c_2\sum\limits_{t=4}^{n-1}\alpha_{t}e_{t+1},
\end{align*}
we have
\[ c_1=0,\qquad c_2\alpha_t=0,\quad 4\leq t\leq n-1.\]

By the property of pre-derivation, we have
\begin{align*}
P(e_4)&=P([[e_1,e_1],e_1])=[[P(e_1),e_1],e_1]+[[e_1,P(e_1)],e_1]+[[e_1,e_1],P(e_1)]\\
& =[(a_1+a_2)e_3+\sum\limits_{t=4}^{n}a_{t-1}e_t,e_1]+[a_1e_3+a_2\big(\sum\limits_{t=4}^{n-1}\alpha_te_t+\theta e_n\big),e_1]+a_1e_4+
a_2\sum\limits_{t=5}^{n}\alpha_{t-1}e_t\\
& =(3a_1+a_2)e_4+\sum\limits_{t=5}^{n}(a_{t-2}+2a_2\alpha_{t-1})e_t.
\end{align*}

On the other hand,
\begin{align*}
P(e_4)&=P([[e_2,e_1],e_1])=[[P(e_2),e_1],e_1]+[[e_2,P(e_1)],e_1]+[[e_2,e_1],P(e_1)] \\
&=[(b_1+b_2)e_3+\sum\limits_{t=4}^{n}b_{t-1}e_t,e_1]+[a_1e_3+a_2\sum\limits_{t=4}^{n}\alpha_te_t,e_1]+a_1e_4+
a_2\sum\limits_{t=5}^{n}\alpha_{t-1}e_t\\
&=(2a_1+b_1+b_2)e_4+\sum\limits_{t=5}^{n}(b_{t-2}+2a_2\alpha_{t-1})e_t.
\end{align*}

Comparing the coefficients at the basic elements we have
\[b_1+b_2=a_1+a_2, \qquad b_{t}=a_t,\quad 3\leq t\leq n-2.\]


Using the property of pre-derivation, we get
\begin{align*}
P(e_5)&=P([[e_3,e_1],e_1])=[[P(e_3),e_1],e_1]+[[e_3,P(e_1)],e_1]+[[e_3,e_1],P(e_1)]\\
&=[c_2e_3+\sum\limits_{t=4}^{n}c_{t-1}e_t,e_1]+[a_1e_4+a_2\sum\limits_{t=5}^{n}\alpha_{t-1}e_t,e_1]+a_1e_5+
a_2\sum\limits_{t=6}^{n}\alpha_{t-2}e_t\\
&=c_2e_4+(2a_1+c_3)e_5+\sum\limits_{t=6}^{n}(c_{t-2}+2a_2\alpha_{t-2})e_t.
\end{align*}

Similarly, from the equality
\[P(e_{j+2})=P([[e_j,e_1],e_1])=[[P(e_j),e_1],e_1]+[[e_j,P(e_1)],e_1]+[[e_j,e_1],P(e_1)],
\quad 3\le j\le n-2,\] inductively, we derive
\begin{align*}
P(e_{2i})&=((2i-1)a_1+a_2)e_{2i}+\sum\limits_{t=2i+1}^{n}(a_{t-2i+2}+(2i-2)a_2\alpha_{t-2i+3})e_t,&& 2\leq i\leq \left\lfloor\frac{n}{2}\right\rfloor.\\
P(e_{2i+1})&=c_2e_{2i}+((2i-2)a_1+c_3)e_{2i+1}+\sum\limits_{t=2i+2}^{n}(c_{t-2i+2}+(2i-2)a_2\alpha_{t-2i+2})e_t,&& 2\leq i\leq \left\lfloor\frac{n-1}{2}\right\rfloor.
\end{align*}

Moreover, in the case of $n$ is even using the property of the
pre-derivation for the triple $\{e_{n-1},e_1,e_1\}$, we deduce
$c_2=0$. In the case of $n$ is odd considering the  property of
$P([[e_{n-1},e_1],e_1])$ give us the identical equality. Thus we
get, \[(1 + (-1)^n)c_2 =0.\]

Consider
\begin{multline*}
  P([[e_1,e_1],e_2])=P([e_3,e_2])=P\Big(\sum\limits_{t=5}^{n}\alpha_{t-1}e_t\Big)\\
=\sum\limits_{t=2}^{\lfloor\frac{n-1}{2}\rfloor}\Big[c_2e_{2t}+((2t-2)a_1+c_3)e_{2t+1}+\sum\limits_{k=2t+2}^{n}(c_{k-2t+2}+(2t-2)a_2\alpha_{k-2t+2})e_k\Big]\alpha_{2t}\\
+\sum\limits_{t=3}^{\lfloor\frac{n}{2}\rfloor}\Big[((2t-1)a_1+a_2)e_{2t}+\sum\limits_{k=2t+1}^{n}(a_{k-2t+2}+(2t-2)a_2\alpha_{k-2t+3})e_k\Big]\alpha_{2t-1}\\
=\sum\limits_{k=2}^{\lfloor\frac{n-1}{2}\rfloor}((2k-2)a_1+c_3)\alpha_{2k}e_{2k+1}+\sum\limits_{k=6}^{n}\sum\limits_{t=2}^{\lfloor\frac{k-2}{2}\rfloor}(c_{k-2t+2}+(2t-2)a_2\alpha_{k-2t+2})\alpha_{2t}e_k\\
+\sum\limits_{k=3}^{\lfloor\frac{n}{2}\rfloor}((2k-1)a_1+a_2)\alpha_{2k-1}e_{2k}+\sum\limits_{k=7}^{n}\sum\limits_{t=3}^{\lfloor\frac{k-1}{2}\rfloor}(a_{k-2t+2}+(2t-2)a_2\alpha_{k-2t+3})\alpha_{2t-1}e_k\\
=(2a_1+c_3)\alpha_4e_5+\sum\limits_{k=3}^{\lfloor\frac{n}{2}\rfloor}\Big[((2k-1)a_1+a_2)\alpha_{2k-1}+\sum\limits_{t=2}^{k-1}(c_{2k-2t+2}+(2t-2)a_2\alpha_{2k-2t+2})\alpha_{2t}\\
+\sum\limits_{t=3}^{k-1}(a_{2k-2t+2}+(2t-2)a_2\alpha_{2k-2t+3})\alpha_{2t-1}\Big]e_{2k}\\
+\sum\limits_{k=3}^{\lfloor\frac{n-1}{2}\rfloor}\Big[((2k-2)a_1+c_3)\alpha_{2k}+\sum\limits_{t=2}^{k-1}(c_{2k-2t+3}+(2t-2)a_2\alpha_{2k-2t+3})\alpha_{2t}\\
+\sum\limits_{t=3}^{k}(a_{2k-2t+3}+(2t-2)a_2\alpha_{2k-2t+4})\alpha_{2t-1}\Big]e_{2k+1}.
\end{multline*}

On the other hand, using the property of pre-derivation, we get
\begin{multline*}
P([[e_1,e_1],e_2])=[[P(e_1),e_1],e_2]+[[e_1,P(e_1)],e_2]+[[e_1,e_1],P(e_2)]\\
=[(a_1+a_2)e_3+\sum\limits_{t=4}^{n}a_{t-1}e_t,e_2]+[a_1e_3+a_2\Big(\sum\limits_{t=4}^{n-1}\alpha_te_t+\theta e_n \Big),e_2]+b_1e_4+
b_2\sum\limits_{t=5}^{n}\alpha_{t-1}e_t\\
=(a_1+a_2)\sum\limits_{t=5}^{n}\alpha_{t-1}e_t+\sum\limits_{t=4}^{n-2}a_{t-1}\sum\limits_{k=t+2}^{n}\alpha_{k-t+2}e_k\\
+ a_1\sum\limits_{t=5}^{n}\alpha_{t-1}e_t+a_2\sum\limits_{t=4}^{n-2}\alpha_{t}\sum\limits_{k=t+2}^{n}\alpha_{k-t+2}e_k+b_1e_4+
b_2\sum\limits_{t=5}^{n}\alpha_{t-1}e_t\\
=b_1e_4+(2a_1+a_2+b_2)\alpha_{4}e_5+\sum\limits_{k=3}^{\lfloor\frac{n}{2}\rfloor}\Big[(2a_1+a_2+b_2)\alpha_{2k-1}+\sum\limits_{t=4}^{2k-2}(a_{t-1}+a_2\alpha_t)\alpha_{2k-t+2}\Big]e_{2k}\\
+\sum\limits_{k=3}^{\lfloor\frac{n-1}{2}\rfloor}\Big[(2a_1+a_2+b_2)\alpha_{2k}+\sum\limits_{t=4}^{2k-1}(a_{t-1}+a_2\alpha_t)\alpha_{2k-t+3}\Big]e_{2k+1}.
\end{multline*}

Comparing the coefficients at the basic elements we have
\begin{equation}\label{eq1}\left\{\begin{array}{cc}
b_1=0,\\[1mm] (2a_1+a_2+b_2)\alpha_4=(2a_1+c_3)\alpha_4,
\end{array}\right.
\end{equation}
\begin{equation}\label{eq2}3\leq k\leq \left\lfloor\frac{n}{2}\right\rfloor\left\{\begin{array}{cc}
(2a_1+a_2+b_2)\alpha_{2k-1}+\sum\limits_{t=4}^{2k-2}(a_{t-1}+a_2\alpha_t)\alpha_{2k-t+2}\\[1mm]
=((2k-1)a_1+a_2)\alpha_{2k-1}+\sum\limits_{t=2}^{k-1}(c_{2k-2t+2}+(2t-2)a_2\alpha_{2k-2t+2})\alpha_{2t}+\\[1mm]
+\sum\limits_{t=3}^{k-1}(a_{2k-2t+2}+(2t-2)a_2\alpha_{2k-2t+3})\alpha_{2t-1},
\end{array}\right.
\end{equation}
\begin{equation}\label{eq3}3\leq k\leq \left\lfloor\frac{n-1}{2}\right\rfloor\left\{\begin{array}{cc}
(2a_1+a_2+b_2)\alpha_{2k}+\sum\limits_{t=4}^{2k-1}(a_{t-1}+a_2\alpha_t)\alpha_{2k-t+3}\\[1mm]
=((2k-2)a_1+c_3)\alpha_{2k}+\sum\limits_{t=2}^{k-1}(c_{2k-2t+3}+(2t-2)a_2\alpha_{2k-2t+3})\alpha_{2t}+\\[1mm]
+\sum\limits_{t=3}^{k}(a_{2k-2t+3}+(2t-2)a_2\alpha_{2k-2t+4})\alpha_{2t-1}.
\end{array}\right.
\end{equation}

Now, we consider
\begin{multline*}
P([[e_3,e_1],e_2])=P([e_4,e_2])=P\Big(\sum\limits_{t=6}^{n}\alpha_{t-2}e_t\Big)\\
=\sum\limits_{t=3}^{\lfloor\frac{n-1}{2}\rfloor}\Big[((2t-2)a_1+c_3)e_{2t+1}+\sum\limits_{k=2t+2}^{n}(c_{k-2t+2}+(2t-2)a_2\alpha_{k-2t+2})e_k\Big]\alpha_{2t-1}\\
+\sum\limits_{t=3}^{\lfloor\frac{n}{2}\rfloor}\Big[((2t-1)a_1+a_2)e_{2t}+\sum\limits_{k=2t+1}^{n}(a_{k-2t+2}+(2t-2)a_2\alpha_{k-2t+3})e_k\Big]\alpha_{2t-2}\\
=\sum\limits_{k=3}^{\lfloor\frac{n-1}{2}\rfloor}((2k-2)a_1+c_3)\alpha_{2k-1}e_{2k+1}+\sum\limits_{k=8}^{n}\sum\limits_{t=3}^{\lfloor\frac{k-2}{2}\rfloor}(c_{k-2t+2}+(2t-2)a_2\alpha_{k-2t+2})\alpha_{2t-1}e_k\\
+\sum\limits_{k=3}^{\lfloor\frac{n}{2}\rfloor}((2k-1)a_1+a_2)\alpha_{2k-2}e_{2k}+\sum\limits_{k=7}^{n}\sum\limits_{t=3}^{\lfloor\frac{k-1}{2}\rfloor}(a_{k-2t+2}+(2t-2)a_2\alpha_{k-2t+3})\alpha_{2t-2}e_k\\
=(5a_1+a_2)\alpha_4e_6+\sum\limits_{k=3}^{\lfloor\frac{n-1}{2}\rfloor}\Big[((2k-2)a_1+c_3)\alpha_{2k-1}+\sum\limits_{t=3}^{k-1}(c_{2k-2t+3}+(2t-2)a_2\alpha_{2k-2t+3})\alpha_{2t-1} \displaybreak  \\
+\sum\limits_{t=3}^{k}(a_{2k-2t+3}+(2t-2)a_2\alpha_{2k-2t+4})\alpha_{2t-2}\Big]e_{2k+1}  \\
+\sum\limits_{k=4}^{\lfloor\frac{n}{2}\rfloor}\Big[((2k-1)a_1+a_2)\alpha_{2k-2}+\sum\limits_{t=3}^{k-1}(c_{2k-2t+2}+(2t-2)a_2\alpha_{2k-2t+2})\alpha_{2t-1}\\
+\sum\limits_{t=3}^{k-1}(a_{2k-2t+2}+(2t-2)a_2\alpha_{2k-2t+3})\alpha_{2t-2}\Big]e_{2k}.
\end{multline*}

On the other hand,
\begin{multline*}
P([[e_3,e_1],e_2])=[[P(e_3),e_1],e_2]+[[e_3,P(e_1)],e_2]+[[e_3,e_1],P(e_2)]\\
=[\sum\limits_{t=3}^{n}c_{t-1}e_t,e_2]+[a_1e_4+a_2\sum\limits_{t=5}^{n}\alpha_{t-1}e_t,e_2]+b_2\sum\limits_{t=6}^{n}\alpha_{t-2}e_t\\
=\sum\limits_{t=4}^{n-2}c_{t-1}\sum\limits_{k=t+2}^{n}\alpha_{k-t+2}e_k+
a_1\sum\limits_{t=6}^{n}\alpha_{t-2}e_t+a_2\sum\limits_{t=5}^{n-2}\alpha_{t-1}\sum\limits_{k=t+2}^{n}\alpha_{k-t+2}e_k+b_2\sum\limits_{t=6}^{n}\alpha_{t-2}e_t\\
=(2a_1+a_2+c_3)\alpha_{4}e_6+\sum\limits_{k=7}^{n}\Big[(2a_1+a_2+c_3)\alpha_{k-2}+\sum\limits_{t=5}^{k-2}(c_{t-1}+a_2\alpha_{t-1})\alpha_{k-t+2}\Big]e_k\\
=(2a_1+a_2+c_3)\alpha_{4}e_6+\sum\limits_{k=3}^{\lfloor\frac{n-1}{2}\rfloor}
\Big[(2a_1+a_2+c_3)\alpha_{2k-1}+\sum\limits_{t=5}^{2k-1}(c_{t-1}+a_2\alpha_{t-1})\alpha_{2k-t+3}\Big]e_{2k+1}\\
+\sum\limits_{k=4}^{\lfloor\frac{n}{2}\rfloor}\Big[(2a_1+a_2+c_3)\alpha_{2k-2}+\sum\limits_{t=5}^{2k-2}(c_{t-1}+a_2\alpha_{t-1})\alpha_{2k-t+2}\Big]e_{2k}.
\end{multline*}

Comparing the coefficients at the basic elements we get
\begin{equation}\label{eq4}
(2a_1+a_2+c_3)\alpha_{4}=(5a_1+a_2)\alpha_4
\end{equation}
\begin{equation}\label{eq5}3\leq k\leq \left\lfloor\frac{n-1}{2}\right\rfloor\left\{\begin{array}{cc}
(2a_1+a_2+c_3)\alpha_{2k-1}+\sum\limits_{t=5}^{2k-1}(c_{t-1}+a_2\alpha_{t-1})\alpha_{2k-t+3}\\[1mm]
=((2k-2)a_1+c_3)\alpha_{2k-1}+\sum\limits_{t=3}^{k-1}(c_{2k-2t+3}+(2t-2)a_2\alpha_{2k-2t+3})\alpha_{2t-1}\\[1mm]
+\sum\limits_{t=3}^{k}(a_{2k-2t+3}+(2t-2)a_2\alpha_{2k-2t+4})\alpha_{2t-2},
\end{array}\right.
\end{equation}
\begin{equation}\label{eq6}4\leq k\leq \left\lfloor\frac{n}{2}\right\rfloor\left\{\begin{array}{cc}
(2a_1+a_2+c_3)\alpha_{2k-2}+\sum\limits_{t=5}^{2k-2}(c_{t-1}+a_2\alpha_{t-1})\alpha_{2k-t+2}\\[1mm]
=((2k-1)a_1+a_2)\alpha_{2k-2}+\sum\limits_{t=3}^{k-1}(c_{2k-2t+2}+(2t-2)a_2\alpha_{2k-2t+2})\alpha_{2t-1}+\\[1mm]
+\sum\limits_{t=3}^{k-1}(a_{2k-2t+2}+(2t-2)a_2\alpha_{2k-2t+3})\alpha_{2t-2}.
\end{array}\right.
\end{equation}

According to $b_1+b_2 = a_1 + a_2$, from equalities
\eqref{eq1} and \eqref{eq4} we obtain
\[(a_1-a_2)\alpha_4=0,\quad  (3a_1-c_3)\alpha_4=0.\]

Subtracting of equalities \eqref{eq2} and \eqref{eq5} we obtain
\[\sum\limits_{t=3}^{k}(a_{2k-2t+3}-c_{2k-2t+4}+a_2\alpha_{2k-2t+4})\alpha_{2t-2}=0, \quad 3 \leq k \leq \left\lfloor\frac{n-1} 2\right\rfloor.\]

Summarizing  of these  equalities  we get
\[\alpha_{k}(a_2-(k-3)a_1)=\frac{k-1}{2}a_2\sum\limits_{t=5}^{k}\alpha_{t-1}\alpha_{k-t+4}\quad \text{for odd }  k,\quad 5 \leq k \leq n-2.\]

Similarly from equalities \eqref{eq3} and \eqref{eq6} we obtain
\[(2a_1+a_2-c_3)\alpha_{2k}+\sum\limits_{t=3}^{k}(a_{2k-2t+4}-c_{2k-2t+5}+a_2\alpha_{2k-2t+5})\alpha_{2t-2}=0, \quad 3 \leq k \leq \left\lfloor\frac{n} 2\right\rfloor -1 \]
and
\[\alpha_{k}(a_2-(k-3)a_1)=\frac{k-1}{2}a_2\sum\limits_{t=5}^{k}\alpha_{t-1}\alpha_{k-t+4}\quad \text{for even }  k, \quad 5 \leq k \leq n-2.\]

From equalities \eqref{eq2} and \eqref{eq3} in the case of
$2k=n$ and $2k=n-1$, respectively we obtain last two restrictions
of equalities \eqref{eq0}.

Considering the properties of the pre-derivation for $P([[e_i,
e_1], e_2])$, $4 \leq i \leq n$ and $P([[e_i, e_2], e_2])$, $3
\leq i \leq n$, we have the same restrictions or identical
equalities.
\end{proof}

In the following proposition we give the descriptions of
pre-derivation of algebras from the second class of filiform
Leibniz algebras.
\begin{prop}\label{prop32}
The pre-derivations of the filiform Leibniz algebras from the
family $F_2(\beta_4,\beta_5,\ldots, \beta_{n}, \gamma)$ have the
following form:
\begin{align*}
P(e_1)&=\sum\limits_{t=1}^{n}a_te_t, \qquad
P(e_2)=b_2e_2+b_{n-1}e_{n-1}+b_ne_n, \qquad
P(e_3)=\sum\limits_{t=2}^{n}c_te_t,\\
P(e_{2i})&=(2i-1)a_1e_{2i}+\sum\limits_{t=2i+1}^{n}(a_{t-2i+2}+(2i-2)a_2\beta_{t-2i+3})e_t,&& 2\leq i\leq \left\lfloor\frac{n}{2}\right\rfloor,\\
P(e_{2i+1})&=((2i-2)a_1+c_3)e_{2i+1}+\sum\limits_{t=2i+2}^{n}(c_{t-2i+2}+(2i-2)a_2\beta_{t-2i+2})e_t, && 2\leq i\leq \left\lfloor\frac{n-1}{2}\right\rfloor,
\end{align*}
where
\begin{equation}\label{eq3.8} \left\{\begin{array}{ll}
(c_3-2a_1)\beta_4=0, \quad (b_2-2a_1)\beta_4=0,\quad
c_2\beta_t=0,& 4\leq
t\leq n-1,\\[1mm]
\sum\limits_{t=3}^{k}(a_{2k-2t+3}-c_{2k-2t+4}+a_2\beta_{2k-2t+4})\beta_{2t-2}=0, & 3\leq k\leq \lfloor\frac{n-1}{2}\rfloor,\\[1mm]
(c_3-2a_1)\beta_{2k}=\sum\limits_{t=3}^{k}(a_{2k-2t+4}-c_{2k-2t+5}+a_2\beta_{2k-2t+5})\beta_{2t-2},
& 3\leq
k\leq \lfloor\frac{n}{2}\rfloor-1,\\[1mm]
(b_2-(k-2)a_1)\beta_{k}=\frac{k-1}{2}a_2\sum\limits_{t=5}^{k}\beta_{t-1}\beta_{k-t+4}, &  5\leq k\leq n-2,\\[1mm]
(b_2-c_3-(n-5)a_1)\beta_{n-1}=a_2\sum\limits_{t=3}^{\frac{n-1}{2}}(2t-3)\beta_{n-2t+3}\beta_{2t-1}+\\[1mm]
\quad\quad\quad\quad\quad\quad\quad\quad
+\sum\limits_{t=2}^{\frac{n-3}{2}}(c_{n-2t+2}-a_{n-2t+1}+(2t-3)a_2\beta_{n-2t+2})\beta_{2t},
& n\quad \text {is odd,}\\[1mm]
(b_2-(n-3)a_1)\beta_{n-1}=a_2\sum\limits_{t=3}^{\frac{n-2}{2}}(2t-3)\beta_{n-2t+3}\beta_{2t-1}+\\[1mm]
\quad\quad\quad\quad\quad\quad\quad\quad+\sum\limits_{t=2}^{\frac{n-2}{2}}(c_{n-2t+2}-a_{n-2t+1}+(2t-3)a_2\beta_{n-2t+2})\beta_{2t},&
n\quad  \text {is even}.
\end{array}\right.
\end{equation}
\end{prop}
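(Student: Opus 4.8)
The plan is to run the same argument as in Proposition~\ref{prop1}, using the two structural features that distinguish the second family: in $F_2(\beta_4,\dots,\beta_n,\gamma)$ one has $[e_2,e_1]=0$ and $[e_1,e_j]=[e_j,e_3]=0$ for every $j\ge3$. Accordingly I would set
\[P(e_1)=\sum_{t=1}^{n}a_te_t,\qquad P(e_2)=\sum_{t=1}^{n}b_te_t,\qquad P(e_3)=\sum_{t=1}^{n}c_te_t,\]
and first harvest the cheap relations. Applying the pre-derivation identity to $\{e_1,e_1,e_3\}$, whose double bracket $[[e_1,e_1],e_3]=[e_3,e_3]$ vanishes, kills all summands except $[e_3,P(e_3)]$ and yields $c_1=0$ and $c_2\beta_t=0$ for $4\le t\le n-1$. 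Since $[e_2,e_1]=0$, the triple $\{e_2,e_1,e_1\}$ gives $0=[[P(e_2),e_1],e_1]+[[e_2,P(e_1)],e_1]$; the second summand reduces to $a_2\gamma e_n$ and then to $0$ after one more bracket with $e_1$, so the first summand alone must vanish, forcing $b_1=0$ and $b_t=0$ for $3\le t\le n-2$. This already gives $P(e_2)=b_2e_2+b_{n-1}e_{n-1}+b_ne_n$.

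Next I would compute the images of the remaining basis vectors inductively from
\[P(e_{j+2})=[[P(e_j),e_1],e_1]+[[e_j,P(e_1)],e_1]+[[e_j,e_1],P(e_1)],\qquad 3\le j\le n-2,\]
starting from $P(e_3)$ and from $P(e_4)=P([[e_1,e_1],e_1])$. The absence of a $c_2$-term in the odd images is automatic here: because $[e_2,e_1]=0$, the bracket $[P(e_3),e_1]$ has no $e_3$-contribution coming from $c_2$. A routine induction on $i$, treating the two parities separately, then produces the stated formulas for $P(e_{2i})$ and $P(e_{2i+1})$.

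The relations collected in \eqref{eq3.8} come from evaluating the pre-derivation identity on $\{e_1,e_1,e_2\}$ and on $\{e_3,e_1,e_2\}$. For the first one, $P([[e_1,e_1],e_2])=P([e_3,e_2])=\sum_{t=5}^{n}\beta_{t-1}P(e_t)$ is computed with the formulas just obtained and is also expanded as $[[P(e_1),e_1],e_2]+[[e_1,P(e_1)],e_2]+[e_3,P(e_2)]$; comparing the coefficients of $e_5$ and of $e_{2k},e_{2k+1}$ produces $b_1=0$ again, $(c_3-2a_1)\beta_4=0$, $(b_2-2a_1)\beta_4=0$, and two $k$-indexed families of equalities. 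The same computation for $\{e_3,e_1,e_2\}$, i.e. $P([e_4,e_2])=\sum_{t=6}^{n}\beta_{t-2}P(e_t)$, gives a second pair of such families. Subtracting matched pairs collapses the nested sums to
\[\sum_{t=3}^{k}\big(a_{2k-2t+3}-c_{2k-2t+4}+a_2\beta_{2k-2t+4}\big)\beta_{2t-2}=0,\qquad (c_3-2a_1)\beta_{2k}=\sum_{t=3}^{k}\big(a_{2k-2t+4}-c_{2k-2t+5}+a_2\beta_{2k-2t+5}\big)\beta_{2t-2},\]
and summing these over the admissible range and telescoping yields $(b_2-(k-2)a_1)\beta_k=\frac{k-1}{2}a_2\sum_{t=5}^{k}\beta_{t-1}\beta_{k-t+4}$ for $5\le k\le n-2$; the two extreme cases $2k=n$ and $2k=n-1$ are kept aside and supply the last two (parity-of-$n$-dependent) identities. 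I would close by checking that the triples $\{e_i,e_1,e_2\}$ with $i\ge4$, $\{e_i,e_2,e_2\}$ with $i\ge2$, and the one involving $\gamma$ only reproduce relations already in hand or become trivial.

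The conceptual content is small; the real work — and the only place where a slip is likely — is the bookkeeping in the last step: reindexing the double sums $\sum_t a_{t-1}\sum_k\beta_{k-t+2}e_k$ and their $c$- and $\beta\beta$-analogues, matching them term-by-term against $\sum_t\beta_{t-1}P(e_t)$, keeping the even/odd index classes and the two parities of $n$ cleanly apart, and verifying that the telescoping of the subtracted relations produces exactly the closed forms displayed in \eqref{eq3.8}. That combinatorial rearrangement is routine but must be done carefully.
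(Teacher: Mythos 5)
Your proposal is correct and follows essentially the same route as the paper's own proof: the same triples $\{e_1,e_1,e_3\}$ and $\{e_2,e_1,e_1\}$ to pin down $P(e_3)$ and $P(e_2)$, the same induction via $P(e_{j+2})=P([[e_j,e_1],e_1])$, and the same comparison of $P([[e_1,e_1],e_2])$ and $P([[e_3,e_1],e_2])$ followed by subtracting/summing the two indexed families and isolating the boundary cases $2k=n-1$, $2k=n$. Your observation that $[e_2,e_1]=0$ is what removes the $c_2$-term from the odd-index images (in contrast to the first family) is exactly the mechanism at work in the paper.
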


\begin{proof} Let $P$ be a pre-derivation of a filiform Leibniz algebra $L$ from the second class. Put
\[P(e_1)=\sum\limits_{t=1}^{n}a_te_t, \quad  P(e_2)=\sum\limits_{t=1}^{n}b_te_t, \quad P(e_3)=\sum\limits_{t=1}^{n}c_te_t.\]

Consider the property of pre-derivation
\begin{align*}
P([[e_2,e_1],e_1])&=[[P(e_2),e_1],e_1]+[[e_2,P(e_1)],e_1]+[[e_2,e_1],P(e_1)]\\
&=[[\sum\limits_{t=1}^{n}b_te_t,e_1],e_1]+[[e_2,\sum\limits_{t=1}^{n}a_te_t],e_1]\\
&=[b_1e_3+\sum\limits_{t=3}^{n-1}b_{t}e_{t+1},e_1]=b_1e_4+\sum\limits_{t=5}^{n}b_{t-2}e_{t}.
\end{align*}

On the other hand, $P([[e_2,e_1],e_1])=0$, since $[e_2, e_1]=0$.
Thus, we have \[ b_1=0, \quad b_t=0,  \quad 3\leq t\leq n-2.\]
Hence, $P(e_2)=b_2e_2+b_{n-1}e_{n-1}+b_ne_n$.

From
\begin{align*}
0&=P([[e_1,e_1],e_3])=[[P(e_1),e_1],e_3]+[[e_1,P(e_1)],e_3]+[[e_1,e_1],P(e_3)]\\
&=[e_3,\sum\limits_{t=1}^{n}c_te_t]=c_1e_4+c_2\sum\limits_{t=5}^{n}\beta_{t-1}e_t,
\end{align*}
we get \[c_1=0, \quad c_2\beta_t=0,\quad 4\leq t\leq n-1.\]


Considering the property of pre-derivation for the triples $\{e_1,
e_1, e_1\}$ and $\{e_i, e_1, e_1\}$ for $3 \leq i \leq n-2$,
inductively we obtain
\begin{align*}
P(e_{2i})&=(2i-1)a_1e_{2i}+\sum\limits_{t=2i+1}^{n}(a_{t-2i+2}+(2i-2)a_2\beta_{t-2i+3})e_t, && 2\leq i\leq \left\lfloor\frac{n}{2}\right\rfloor.\\
P(e_{2i+1})&=((2i-2)a_1+c_3)e_{2i+1}+\sum\limits_{t=2i+2}^{n}(c_{t-2i+2}+(2i-2)a_2\beta_{t-2i+2})e_t, &&  2\leq i\leq \left\lfloor\frac{n-1}{2}\right\rfloor.
\end{align*}

Now, we consider
\begin{align*}
P([[e_1,e_1],e_2])&=[[P(e_1),e_1],e_2]+[[e_1,P(e_1)],e_2]+[[e_1,e_1],P(e_2)]\\
&=[[\sum\limits_{t=1}^{n}a_te_t,e_1],e_2]+[[e_1,\sum\limits_{t=1}^{n}a_te_t],e_2]+[e_3,b_2e_2+b_{n-1}+b_te_t]\\
&=(2a_1+b_2)\beta_4e_5+\sum\limits_{k=3}^{\lfloor\frac{n}{2}\rfloor}\Big[(2a_1+b_2)\beta_{2k-1}
\sum\limits_{t=4}^{2k-2}(a_{t-1}+a_2\beta_t)\beta_{2k-t+2}\Big]e_{2k}\\
&\quad  +\sum\limits_{k=3}^{\lfloor\frac{n-1}{2}\rfloor}\Big[(2a_1+b_2)\beta_{2k}+\sum\limits_{t=4}^{2k-1}(a_{t-1}+a_2\beta_t)\beta_{2k-t+3}\Big]e_{2k+1}.
\end{align*}

On the other hand,
\begin{align*}
P([[e_1,e_1],e_2])&=P([e_3,e_2])=P\Big(\sum\limits_{t=5}^{n}\beta_{t-1}e_t\Big)\\
& =\beta_4(2a_1+c_3)e_5
+\sum\limits_{k=3}^{\lfloor\frac{n}{2}\rfloor}\Big[\beta_{2k-1}(2k-1)a_1  +\sum\limits_{t=2}^{k-1}\beta_{2t}(c_{2k-2t+2}+(2t-2)a_2\beta_{2k-2t+2})\\
 &  \qquad  \qquad \qquad \qquad \qquad +\sum\limits_{t=3}^{k-1}\beta_{2t-1}(a_{2k-2t+2}+(2t-2)a_2\beta_{2k-2t+3})\Big]e_{2k}\\
& \qquad +\sum\limits_{k=3}^{\lfloor\frac{n-1}{2}\rfloor}\Big[\beta_{2k}((2k-2)a_1+c_3)+
\sum\limits_{t=2}^{k-1}\beta_{2t}(c_{2k-2t+3}+(2t-2)a_2\beta_{2k-2t+3})\\
& \qquad  \qquad \qquad  +\sum\limits_{t=3}^{k}\beta_{2t-1}(a_{2k-2t+3}+(2t-2)a_2\beta_{2k-2t+4})\Big]e_{2k+1}.
\end{align*}
Comparing the coefficients at the basis elements, we have
\begin{equation}\label{eq3.9}(2a_1+b_2)\beta_4=(2a_1+c_3)\beta_4.\end{equation}
\begin{equation}\label{eq3.10}
3\leq k\leq \left\lfloor\frac{n}{2}\right\rfloor \left\{\begin{array}{c} (2a_1+b_2)\beta_{2k-1}+\sum\limits_{t=4}^{2k-2}(a_{t-1}+a_2\beta_t)\beta_{2k-t+2}\\[1mm]
=\beta_{2k-1}(2k-1)a_1+\sum\limits_{t=2}^{k-1}\beta_{2t}(c_{2k-2t+2}+(2t-2)a_2\beta_{2k-2t+2})\\[1mm]
+\sum\limits_{t=3}^{k-1}\beta_{2t-1}(a_{2k-2t+2}+(2t-2)a_2\beta_{2k-2t+3}),
\end{array}\right.\end{equation}
\begin{equation}\label{eq3.11}
3\leq k\leq \left\lfloor\frac{n-1}{2}\right\rfloor
\left\{\begin{array}{c}(2a_1+b_2)\beta_{2k}+\sum\limits_{t=4}^{2k-1}(a_{t-1}+a_2\beta_t)\beta_{2k-t+3}\\[1mm]
=\beta_{2k}((2k-2)a_1+c_3)+\sum\limits_{t=2}^{k-1}\beta_{2t}(c_{2k-2t+3}+(2t-2)a_2\beta_{2k-2t+3})\\[1mm]
+\sum\limits_{t=3}^{k}\beta_{2t-1}(a_{2k-2t+3}+(2t-2)a_2\beta_{2k-2t+4}).
\end{array}\right.
\end{equation}

Analogously, from the equality \[P([e_4, e_2]) =
P([[e_3,e_1],e_2])=[[P(e_3),e_1],e_2]+[[e_3,P(e_1)],e_2]+[[e_3,e_1],P(e_2)],\]
we obtain the following restrictions:
\begin{equation}\label{eq3.12}(a_1+b_2+c_3)\beta_{4}=5a_1\beta_4,\end{equation}
\begin{equation}\label{eq3.13}3\leq k\leq \left\lfloor\frac{n-1}{2}\right\rfloor\left\{\begin{array}{c}(a_1+b_2+c_3)\beta_{2k-1}+\sum\limits_{t=5}^{2k-1}(c_{t-1}+a_2\beta_{t-1})\beta_{2k-t+3}\\[1mm]
=\beta_{2k-1}((2k-2)a_1+c_3)+\sum\limits_{t=3}^{k-1}\beta_{2t-1}(c_{2k-2t+3}+(2t-2)a_2\beta_{2k-2t+3})\\[1mm]
+\sum\limits_{t=3}^{k}\beta_{2t-2}(a_{2k-2t+3}+(2t-2)a_2\beta_{2k-2t+4}),\end{array}\right.\end{equation}
\begin{equation}\label{eq3.14}4\leq k\leq \left\lfloor\frac{n}{2}\right\rfloor\left\{\begin{array}{c}(a_1+b_2+c_3)\beta_{2k-2}+\sum\limits_{t=5}^{2k-2}(c_{t-1}+a_2\beta_{t-1})\beta_{2k-t+2}\\[1mm]
=\beta_{2k-2}(2k-1)a_1+\sum\limits_{t=3}^{k-1}\beta_{2t-1}(c_{2k-2t+2}+(2t-2)a_2\beta_{2k-2t+2})\\[1mm]
+\sum\limits_{t=3}^{k-1}\beta_{2t-2}(a_{2k-2t+2}+(2t-2)a_2\beta_{2k-2t+3}).\end{array}\right.\end{equation}

It is not difficult to see that from \eqref{eq3.9} and
\eqref{eq3.12} we have
\[(c_3-2a_1)\beta_4=0, \quad (b_2-2a_1)\beta_4=0.\]

Similarly to the proof of Proposition \ref{prop1}, summarizing and
subtracting equalities \eqref{eq3.10} and \eqref{eq3.13}, we
obtain
\[\sum\limits_{t=3}^{k}(a_{2k-2t+3}-c_{2k-2t+4}+a_2\beta_{2k-2t+4})\beta_{2t-2}=0, \quad 3 \leq k \leq \left\lfloor\frac {n-1} 2\right\rfloor\]
and
\[(b_2-(k-2)a_1)\beta_{k}=\frac{k-1}{2}a_2\sum\limits_{t=5}^{k}\beta_{t-1}\beta_{k-t+4}\quad \text{for odd } k, \quad 5 \leq k \leq n-2.\]

From equalities \eqref{eq3.11} and \eqref{eq3.14} we have
\[(c_3-2a_1)\beta_{2k}=\sum\limits_{t=3}^{k}(a_{2k-2t+4}-c_{2k-2t+5}+a_2\beta_{2k-2t+5})\beta_{2t-2}, \quad 3 \leq k \leq \left\lfloor\frac {n} 2\right\rfloor -1,\] and
\[(b_2-(k-2)a_1)\beta_{k}=\frac{k-1}{2}a_2\sum\limits_{t=5}^{k}\beta_{t-1}\beta_{k-t+4}\quad \text{for even } k, \quad 5 \leq k \leq n-2.\]

From equalities \eqref{eq3.10} and \eqref{eq3.11} in the case
of $2k=n-1$ and $2k=n$, respectively we obtain last the two
restrictions of equalities \eqref{eq3.8}.

Considering the properties of the pre-derivation for $P([[e_i,
e_1], e_2])$ for $4 \leq i \leq n$ and $P([[e_i, e_2], e_2])$ for
$3 \leq i \leq n$, we have the same restrictions or identical
equalities.
\end{proof}

\section{Strongly nilpotent filiform Leibniz algebras}\label{S2}

In this section we describe non-strongly nilpotent filiform
Leibniz algebras. We give the description of non-strongly
nilpotent filiform Leibniz algebras of the first and second
classes. We reduce to investigation of strongly nilpotency of
third class of filiform Leibniz algebras to the investigation of Lie
algebras.

First, we consider the case of filiform Leibniz algebras from the
first class. From Proposition \ref{prop1} it is obvious that if
there exist the parameters $a_1, a_2, c_2, c_3$ such that $(a_1,
a_2, c_2, c_3) \neq (0, 0, 0, 0)$ and the restriction \eqref{eq0}
holds, then a filiform Leibniz algebra of the first family is
non-strongly nilpotent, otherwise is strongly nilpotent.

\begin{prop} \label{prop41} Let $L(\alpha_4, \alpha_5, \dots, \alpha_n, \theta)$ be a filiform Leibniz algebra of the first family. If
$\alpha_4 =  \alpha_5 =  \dots =\alpha_{n-1} =0$, then $L$ is
non-strongly nilpotent.
\end{prop}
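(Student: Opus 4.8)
The plan is to apply Proposition~\ref{prop1} together with the remark preceding the statement: an algebra $L(\alpha_4,\dots,\alpha_n,\theta)$ of the first family is non-strongly nilpotent exactly when the system~\eqref{eq0} admits a solution with $(a_1,a_2,c_2,c_3)\neq(0,0,0,0)$. Hence it is enough to produce one such tuple under the hypothesis $\alpha_4=\alpha_5=\dots=\alpha_{n-1}=0$ and then to check that the pre-derivation it determines is not nilpotent.

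First I would run through the constraints in~\eqref{eq0} and observe that, with the single exception of the relation $(1+(-1)^n)c_2=0$, each of them is a sum of terms every one of which carries a factor $\alpha_j$ with $4\le j\le n-1$ (and in the quadratic relations each product $\alpha_p\alpha_q$ already has \emph{both} indices in $\{4,\dots,n-1\}$). Concretely: the relations with an explicit $\alpha_4$ are immediate; in the sums running over $3\le k\le\lfloor\frac{n-1}{2}\rfloor$ or $3\le k\le\lfloor\frac{n}{2}\rfloor-1$ the indices $2t-2$, $2k$, $2k-2t+4$, $2k-2t+5$ all lie between $4$ and $n-3$; in the family indexed by $5\le k\le n-2$ the left factor $\alpha_k$ and each $\alpha_{t-1}$, $\alpha_{k-t+4}$ on the right have index $\le n-2$; and in the last two parity-dependent relations the factor $\alpha_{n-1}$ annihilates the left-hand side, while on the right every index $n-2t+3$, $2t-1$, $2t$, $n-2t+2$ that occurs lies in $\{4,\dots,n-1\}$. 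In particular neither $\alpha_n$ nor $\theta$ ever appears. Thus, under the hypothesis, the whole system~\eqref{eq0} collapses to the single requirement $(1+(-1)^n)c_2=0$.

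It now suffices to take, say, $a_1=1$ and $a_2=c_2=c_3=0$, together with all the remaining free parameters $a_3,\dots,a_n,b_{n-1},b_n,c_4,\dots,c_n$ equal to zero. This tuple satisfies~\eqref{eq0}, so by Proposition~\ref{prop1} the associated map $P$ is a pre-derivation of $L$, and the formulas there give $P(e_1)=e_1$ (in fact $P(e_{2i})=(2i-1)e_{2i}$ and $P(e_{2i+1})=(2i-2)e_{2i+1}$, $P(e_3)=0$). Since $P^k(e_1)=e_1\neq 0$ for every $k$, the operator $P$ is not nilpotent, hence $L$ is non-strongly nilpotent.

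There is no genuine obstacle here; the argument is a bookkeeping check that every summand in~\eqref{eq0} really contains an $\alpha_j$ with $4\le j\le n-1$ — so that the hypothesis can be applied — together with the trivial observation that the leftover relation $(1+(-1)^n)c_2=0$ is harmless, since one may simply set $c_2=0$.
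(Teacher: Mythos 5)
Your proof is correct and takes essentially the same route as the paper's, which simply observes that under the hypothesis the whole system \eqref{eq0} is trivially satisfied for arbitrary $a_1,a_2,c_3$ (with $c_2=0$), so a non-nilpotent pre-derivation exists. You merely make the bookkeeping explicit and exhibit the concrete diagonal non-nilpotent pre-derivation given by $a_1=1$, $a_2=c_2=c_3=0$, which the paper leaves implicit.
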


\begin{proof} It is immediate that, if
$\alpha_4 =  \alpha_5 =  \dots =\alpha_{n-1} =0$, then the
restriction \eqref{eq0} is hold for any values of $a_1, a_2,
c_3$. Thus, we have that $L$ has a non-nilpotent pre-derivation,
which implies that $L$ is  non-strongly nilpotent.
\end{proof}

It is obvious that any algebra from the family $F_1(0,\dots, 0,
\alpha_n, \theta)$ is isomorphic to one of the following four
algebras:
\[F_1(0,\dots, 0, 0, 0), \quad F_1(0,\dots, 0, 0, 1),\quad F_1(0,\dots, 0, 1, 0), \quad F_1(0,\dots, 0, 1, 1).\]

Remark that the algebras $F_1(0,\dots, 0, 0, 0)$, $F_1(0,\dots, 0, 0,
1)$ and $ F_1(0,\dots, 0, 1, 1)$
 are non-characteristically nilpotent (see \cite{KhLO}). The algebra $F_1(0,\dots, 0, 1, 0)$ is characteristically nilpotent,
 but non-strongly nilpotent.

Now we consider the case of $\alpha_i\neq 0$ for some $i \ (4 \leq i
\leq n-1)$. Then from \eqref{eq0} we have that $c_2=0$.

\begin{thm}\label{thm41} Let $L$ be a filiform Leibniz
algebra from the family $F_1(\alpha_4, \alpha_5, \dots, \alpha_n,
\theta)$ and let $n$ be even.
 $L$ is non-strongly nilpotent if and only if the parameters $(\alpha_4, \alpha_5,  \alpha_6,\dots, \alpha_{n-1}, \alpha_n,
\theta)$ are one of the following values:

i) $\alpha_4 \neq 0$ and $\alpha_k =
(-1)^kC_{k-4}^2\alpha_4^{k-3}, \quad 5 \leq k \leq n-2$;

ii) $\left\{\begin{array}{lll}\alpha_{(2s-3)t+3} =
(-1)^{t+1}C_{t}^{2s-2}\alpha_{2s}^{t}, & 3 \leq s \leq \frac{n-2}2
& 1 \leq t \leq \lfloor\frac
{n-5} {2s-3}\rfloor, \\[1mm]
\alpha_j =0, & j \neq (2s-3)t+3, & 4 \leq j \leq
n-2;
\end{array}\right.$

iii) $\alpha_{2i}=0$, for $2 \leq i \leq \frac {n-2} 2$;

\noindent where $C^{p}_{n} = \frac {1} {(p-1)n+1} \dbinom{pn}{n}$ is the
$p$-th Catalan number.
\end{thm}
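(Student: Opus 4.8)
The plan is to treat the theorem as a purely algebraic problem about the constraint system \eqref{eq0}: by the remark preceding the statement, $L$ is non-strongly nilpotent precisely when \eqref{eq0} admits a solution with $(a_1,a_2,c_2,c_3)\neq(0,0,0,0)$, and we are now in the case $\alpha_i\neq 0$ for some $4\le i\le n-1$, which forces $c_2=0$. So the existence of a non-nilpotent pre-derivation reduces to: there is $(a_1,a_2,c_3)\neq(0,0,0)$ together with suitable $a_3,\dots,a_n,c_3,\dots,c_n,b_{n-1},b_n$ satisfying the remaining lines of \eqref{eq0}. I would first isolate the equations that do \emph{not} involve the ``free tail'' parameters $a_3,\dots$ and $c_4,\dots$: these are the two $\alpha_4$-equations $(a_1-a_2)\alpha_4=0$, $(3a_1-c_3)\alpha_4=0$, and crucially the recursion
\[
(a_2-(k-3)a_1)\alpha_k=\tfrac{k-1}{2}\,a_2\sum_{t=5}^{k}\alpha_{t-1}\alpha_{k-t+4},\qquad 5\le k\le n-2,
\]
which is a closed condition on the $\alpha$'s once $(a_1,a_2)$ is fixed. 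The remaining equations (the two sums $\sum_{t=3}^k(\dots)\alpha_{2t-2}=0$ and $(2a_1+a_2-c_3)\alpha_{2k}+\sum(\dots)=0$, and the final even/odd line) involve the $a_j,c_j$ with $j\ge3$; I would argue that, \emph{given} a solution of the closed part, one can always solve these for $a_3,a_4,\dots$ and $c_4,c_5,\dots,b_{n-1},b_n$ recursively, because each successive equation introduces a new, previously-unconstrained variable (this is the same ``the same restrictions or identical equalities'' phenomenon already exploited in the proof of Proposition~\ref{prop1}). Hence the whole problem collapses to: \emph{for which $(\alpha_4,\dots,\alpha_n,\theta)$ does the closed system have a solution with $(a_1,a_2,c_3)\neq 0$?}

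Next I would do a case split according to which of $a_1,a_2$ vanish. If $a_2=0$ but $a_1\neq0$: then the recursion forces $(-( k-3)a_1)\alpha_k=0$, so $\alpha_k=0$ for all $k\neq 3$ in the range, i.e.\ all $\alpha_{2i}=0$ for $2\le i\le\frac{n-2}{2}$ and all odd $\alpha_k=0$; combined with the $\alpha_4$-equations this is consistent (and $c_3=3a_1$), giving case (iii) (one must also check the final line forces nothing more — here $n$ even and $\alpha_{n-1}$ is then free, which is why (iii) only constrains the $\alpha_{2i}$ up to index $n-2$ and imposes nothing on $\alpha_{n-1},\alpha_n$). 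If $a_2\neq0$: normalize $a_2=1$ and write $a_1=\lambda$. The $\alpha_4$-equations give either $\alpha_4=0$ or $\lambda=1$, $c_3=3$. Sub-case $\alpha_4\neq0$, $\lambda=1$: the recursion becomes $(1-(k-3))\alpha_k=\tfrac{k-1}{2}\sum_{t=5}^{k}\alpha_{t-1}\alpha_{k-t+4}$, i.e.\ $(4-k)\alpha_k=\tfrac{k-1}{2}\sum\alpha_{t-1}\alpha_{k-t+4}$; I would verify by induction that $\alpha_k=(-1)^kC_{k-4}^2\alpha_4^{k-3}$ is the unique solution, using the Catalan convolution identity \eqref{E:comb} with $p=2$ (this is exactly where the $p$-th Catalan numbers were introduced for), and that these values do satisfy the later equations and the final even-$n$ line. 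That is case (i).

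The remaining sub-case, $a_2\neq0$ and $\alpha_4=0$, is where case (ii) comes from and will be the main obstacle. Here $\lambda=a_1$ is still free. With $\alpha_4=0$, let $2s$ be the smallest even index with $\alpha_{2s}\neq0$ (if no even $\alpha$ is nonzero we are back in (iii)/degenerate situations, to be handled separately). Evaluating the recursion at $k=2s$ gives $(1-(2s-3)\lambda)\alpha_{2s}=0$ (the right-hand sum vanishes because every term pairs two lower-index $\alpha$'s, at least one of which is zero by minimality), forcing $\lambda=\frac{1}{2s-3}$. Then I would run the recursion upward and show, by induction on $t$, that the only indices that can be nonzero are $j=(2s-3)t+3$ and that there $\alpha_{(2s-3)t+3}=(-1)^{t+1}C_t^{2s-2}\alpha_{2s}^{t}$ — the arithmetic-progression pattern arises because the recursion at index $k$ relates $\alpha_k$ to convolutions $\alpha_{t-1}\alpha_{k-t+4}$, and with the single ``seed'' $\alpha_{2s}$ only indices in the coset $3\pmod{2s-3}$ survive; the Catalan closed form again drops out of \eqref{E:comb}, now with $p=2s-2$. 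One must check the boundary: the pattern can only be sustained while $(2s-3)t+3\le n-2$, which gives the range $1\le t\le\lfloor\frac{n-5}{2s-3}\rfloor$ in (ii), and then verify the final even-$n$ equation for $\alpha_{n-1}$ imposes no further obstruction (it will be automatically satisfied or will pin down $\alpha_{n-1}$ harmlessly). Conversely, for each of (i), (ii), (iii) one exhibits the explicit $(a_1,a_2,c_3)$ and completes it to a full solution of \eqref{eq0}, proving non-strong-nilpotency. The delicate points I expect to fight with are (a) making the ``solve the tail recursively'' reduction genuinely rigorous rather than hand-wavy, and (b) the bookkeeping in the induction of sub-case (ii) — keeping track of exactly which convolution terms survive and matching them to the Catalan identity — plus the separate verification that the final (even $n$) line in \eqref{eq0} never contradicts the patterns (i)–(iii).
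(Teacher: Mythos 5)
Your overall strategy is the paper's: reduce non-strong nilpotency to the solvability of \eqref{eq0} with $(a_1,a_2,c_2,c_3)\neq 0$, note $c_2=0$, split according to the first non-vanishing even parameter, solve the closed recursion on the $\alpha_k$ via the Catalan convolution \eqref{E:comb}, and absorb the remaining equations by solving for the tail coefficients $c_i$ (the paper does exactly this, obtaining $c_i=a_{i-1}+a_1\alpha_i$ and using the last line of \eqref{eq4.1} to fix $c_{n-2}$ resp.\ $c_{n-2s+2}$). Cases (i) and (ii) in your plan match the paper's Cases 1 and 2.

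There is, however, a genuine gap in your treatment of case (iii), caused by the way you enumerate witnesses. Your explicit split is ``$a_2=0,\,a_1\neq 0$'' versus ``$a_2\neq 0$'', which omits the third possibility $a_1=a_2=0$, $c_3\neq 0$ — and that omitted witness is precisely the one that generates case (iii). In the branch $a_2=0$, $a_1\neq 0$ your analysis is also wrong on a point you single out: the final (even-$n$) line of \eqref{eq0} reduces there to $-(n-4)a_1\alpha_{n-1}=0$ (the right-hand side vanishes because $a_2=0$ and every remaining term carries a factor $\alpha_{2t}$, all of which you have already shown to be zero), so $\alpha_{n-1}$ is \emph{not} free; this branch forces $\alpha_4=\dots=\alpha_{n-1}=0$ and is therefore vacuous under the standing hypothesis that some $\alpha_i\neq 0$. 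Consequently your explicit cases never produce a witness for a generic algebra of type (iii): with $a_2\neq 0$ and all even $\alpha$'s zero, the recursion $(a_2-(2s-2)a_1)\alpha_{2s+1}=sa_2\sum_{t=3}^{s}\alpha_{2t-1}\alpha_{2s+5-2t}$ imposes a Catalan-type pattern on the odd $\alpha$'s, so arbitrary odd parameters are not reachable that way. The fix is exactly the paper's Case 3: take $a_1=a_2=0$, $c_3\neq 0$; then all equations of \eqref{eq4.1} hold for arbitrary odd $\alpha$'s once the even ones vanish (the equations $-c_3\alpha_{2k}+\dots=0$ conversely force $\alpha_{2k}=0$, which also completes the ``only if'' direction for this witness type). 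You flag this as a ``degenerate situation to be handled separately,'' but as written the case analysis is neither exhaustive nor correct on the branch you do treat, so this must be repaired rather than deferred.
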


\begin{proof} From Proposition \ref{prop1} we have
\begin{equation}\label{eq4.1}\left\{\begin{array}{ll}
(a_1-a_2)\alpha_4=0,\quad  (3a_1-c_3)\alpha_4=0,\\[1mm]
\sum\limits_{t=3}^{k}(a_{2k-2t+3}-c_{2k-2t+4}+a_2\alpha_{2k-2t+4})\alpha_{2t-2}=0,&3\leq k\leq \frac{n-2}{2},\\[1mm]
(2a_1+a_2-c_3)\alpha_{2k}+\sum\limits_{t=3}^{k}(a_{2k-2t+4}-c_{2k-2t+5}+a_2\alpha_{2k-2t+5})\alpha_{2t-2}=0,& 3\leq k\leq \frac{n-2}{2},\\[1mm]
(a_2-(k-3)a_1)\alpha_{k}=\frac{k-1}{2}a_2\sum\limits_{t=5}^{k}\alpha_{t-1}\alpha_{k-t+4},& 5\leq k\leq n-2,\\[1mm]
(a_2-(n-4)a_1)\alpha_{n-1}=
a_2\sum\limits_{t=3}^{\frac{n-2}{2}}(2t-3)\alpha_{n-2t+3}\alpha_{2t-1}\\[1mm]
\quad\quad\quad\quad\quad\quad\quad\quad+\sum\limits_{t=2}^{\frac{n-2}{2}}(c_{n-2t+2}-a_{n-2t+1}+(2t-3)a_2\alpha_{n-2t+2})\alpha_{2t}.
\end{array}\right.
\end{equation}

\textbf{Case 1.} If $\alpha_4 \neq 0$, then from the first two
equalities of \eqref{eq4.1} we get $a_2 =a_1$, $c_3=3a_1$ and from
the next two equalities of \eqref{eq4.1} we obtain \[c_i = a_{i-1} + a_1 \alpha_i,
\qquad 4 \leq i \leq n-3.\]

Since $a_2 =a_1$, $c_3=3a_1$, we get that $L$ is non-strongly
nilpotent if and only if $a_1 \neq 0$. Therefore we have
\begin{align*}
\alpha_{k}&=\frac{k-1}{2(4-k)}\sum\limits_{t=5}^{k}\alpha_{t-1}\alpha_{k-t+4},
&&  5\leq k\leq n-2,\\
(5-n)a_1\alpha_{n-1}& = (c_{n-2}-a_{n-3} + a_1\alpha_{n-2})\alpha_4
+a_1\sum\limits_{t=5}^{n-2}(t-2)\alpha_{n-t+2}\alpha_{t}.
\end{align*}

Using equality \eqref{E:comb} we get that
\[\alpha_k =(-1)^kC_{k-4}^2\alpha_4^{k-3}, \qquad 5 \leq k \leq n-2\]
 and
\[c_{n-2} = \frac{1}{\alpha_4}\Big((5-n)a_1\alpha_{n-1} - a_1\sum\limits_{t=5}^{n-2}(t-2)\alpha_{n-t+2}\alpha_{t} \Big) +a_{n-3} - a_1\alpha_{n-2}.\]

Note that the parameters $\alpha_{n-1}, \alpha_{n}$ and $\theta$
are free and we have the case \textit{i}).

\textbf{Case 2.}  Let $\alpha_{2s} \neq 0$ for some $s \ (3 \leq s
\leq \frac {n-2} 2)$ and $\alpha_{2i} = 0$ for $2 \leq i \leq
s-1$. Then similarly to the previous case from  equality
\eqref{eq4.1} we get
\[(2a_1+a_2 - c_3)\alpha_{2s}=0, \qquad (a_2 - (2s-3) a_1)\alpha_{2s}=0,\]
which derive $a_2=(2s-3) a_1$, $c_3 = (2s-1) a_1$ and
\[c_i = a_{i-1} + a_1 \alpha_i, \qquad 4 \leq i \leq n-2s+1.\]

If $L$ is non-strongly nilpotent then $a_1 \neq 0$.  Consequently,
we have
\[\alpha_{k}=\frac{(k-1)(2s-3)}{2(2s-k)}\sum\limits_{t=5}^{k}\alpha_{t-1}\alpha_{k-t+4},\qquad  5\leq k\leq n-2.\]

Using $\alpha_{2i} = 0$ for $2 \leq i \leq s-1$ and applying
formula \eqref{E:comb}, inductively on $t$ we obtain
\begin{align*}
\alpha_j &= 0, \qquad   \qquad  j \neq (2s-3)t+3, \qquad  4 \leq j \leq n-2,\\
\alpha_{(2s-3)t+3} &= (-1)^{t+1}C_{t}^{2s-2}\alpha_{2s}^{t},  \qquad   \qquad   \qquad  \quad 1 \leq t \leq \left\lfloor\frac {n-5} {2s-3}\right\rfloor.
\end{align*}
From the last equality of \eqref{eq4.1} we have
\[c_{n-2s+2} = \frac{1}{\alpha_{2s}}\Big((2s+1-n)a_1\alpha_{n-1} - (2s-3)a_1\sum\limits_{t=2s+1}^{n-2s+2}(t-2)\alpha_{n-t+2}\alpha_{t} \Big) +a_{n-2s+1} -
(2s-3)^2a_1\alpha_{n-2s}.\]

The parameters $\alpha_n-1$, $\alpha_n$ and $\theta$ may take any
values and we obtain case \textit{ii}).

 \textbf{Case 3.} Let $\alpha_{2i}=0$
for all $i \ (2 \leq i \leq \frac {n-2} 2)$. Then the first four
equalities of \eqref{eq4.1} hold and from the last equalities
we have
\begin{align*}
&\alpha_5(a_2-2a_1) =0, \qquad \alpha_{2s+1}(a_2-(2s-2)a_1) =sa_2\sum\limits_{t=3}^s\alpha_{2t-1}\alpha_{2s+5-2t}, \quad 3 \leq s \leq \frac {n-2} 2.\\
&(a_2-(n-4)a_1)\alpha_{n-1}=
a_2\sum\limits_{t=3}^{\frac{n-2}{2}}(2t-3)\alpha_{n-2t+3}\alpha_{2t-1}.
\end{align*}

Taking $a_1=a_2=0$ and $c_3\neq 0$, we have that previous
equalities hold for any values of $\alpha_{2s+1}$. Since
$c_3\neq 0$, this algebra is non-strongly nilpotent and we
obtain the case \textit{iii}).
\end{proof}

\begin{thm}\label{thm42}
Let $L$ be a filiform Leibniz
algebra from the family $F_1(\alpha_4, \alpha_5, \dots, \alpha_n,
\theta)$ and let $n$ be odd.
 $L$ is non-strongly nilpotent if and only if the parameters $(\alpha_4, \alpha_5,  \alpha_6,\dots, \alpha_{n-1}, \alpha_n,
\theta)$ are one of the following values:

i) $\alpha_4 \neq 0$ and $\alpha_k =
(-1)^kC_{k-4}^2\alpha_4^{k-3}, \quad 5 \leq k \leq n-2$;

ii) $\left\{\begin{array}{lll}\alpha_{(2s-3)t+3} =
(-1)^{t+1}C_{t}^{2s-2}\alpha_{2s}^{t}, & 3 \leq s \leq \frac {n-3}
2,& 1 \leq t \leq \lfloor\frac {n-5} {2s-3}\rfloor,\\[1mm] \alpha_j = 0,
& j \neq (2s-3)t+2, & 4 \leq j \leq n-2;\end{array}\right.$

iii) $\alpha_{2i}=0$, for $2 \leq i \leq \frac {n-1} 2$;

iv) $\left\{\begin{array}{lll}\alpha_{(2s-2)t+3} =
(-1)^{t+1}C_{t}^{2s-1}\alpha_{2s+1}^{t}, & 2 \leq s \leq \frac
{n-3} 2, & 1 \leq t \leq \lfloor\frac {n-5} {2s-2}\rfloor, \\[1mm] \alpha_j
= 0, & j \neq (2s-2)t+3, & 4 \leq j \leq n-2.
\end{array}\right.$
\end{thm}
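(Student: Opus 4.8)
The plan is to imitate the proof of Theorem~\ref{thm41} essentially line by line, the only structural novelty being that when $n$ is odd the index $n-1$ is even but lies \emph{outside} the ``structural'' range $4\le j\le n-2$ that the first blocks of relations constrain, and that the relation $(1+(-1)^n)c_2=0$ is now vacuous (so $c_2$ is killed only by $c_2\alpha_t=0$, once some $\alpha_i\ne0$). So I would first record, as the odd-$n$ analogue of \eqref{eq4.1}, the relations that Proposition~\ref{prop1} forces: the two $\alpha_4$-equations, the two ``even-index'' blocks of sums for $3\le k\le\frac{n-3}2$, the recursion $(a_2-(k-3)a_1)\alpha_k=\frac{k-1}2a_2\sum_{t=5}^{k}\alpha_{t-1}\alpha_{k-t+4}$ for $5\le k\le n-2$, and the single odd-$n$ final equation of \eqref{eq0} (the one involving $\alpha_{n-1}$).

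\textbf{Cases where a structural $\alpha_{2i}$ is nonzero.} If $\alpha_4\ne0$, or if $\alpha_4=0$ but $\alpha_{2s}\ne0$ for the least $s$ with $3\le s\le\frac{n-3}2$, the argument is verbatim that of Theorem~\ref{thm41}, Cases~1 and~2: the $\alpha_4$- (resp. $\alpha_{2s}$-) equations give $a_2=a_1,\ c_3=3a_1$ (resp. $a_2=(2s-3)a_1,\ c_3=(2s-1)a_1$) and $c_i=a_{i-1}+a_1\alpha_i$ on the relevant range; $P$ is non-nilpotent precisely when $a_1\ne0$; and the recursion, together with the combinatorial identity \eqref{E:comb} (with $p=2$, resp. inductively on $t$ with $p=2s-2$), yields $\alpha_k=(-1)^kC_{k-4}^2\alpha_4^{k-3}$, resp. $\alpha_{(2s-3)t+3}=(-1)^{t+1}C_t^{2s-2}\alpha_{2s}^{t}$ with $\alpha_j=0$ for the remaining $j$ in $4\le j\le n-2$. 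The odd-$n$ final equation is then just one more linear relation fixing a single $c$-coefficient, while $\alpha_{n-1},\alpha_n,\theta$ stay free. This produces families i) and ii).

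\textbf{Case where all structural $\alpha_{2i}$ vanish ($2\le i\le\frac{n-3}2$).} Here the two $\alpha_4$-equations and the two ``even-index'' blocks collapse to identities, and only the recursion on the odd parameters $\alpha_5,\alpha_7,\dots,\alpha_{n-2}$ and the final equation survive; after discarding the terms containing a structural $\alpha_{2i}$, the latter reads $(2a_2-c_3-(n-6)a_1)\alpha_{n-1}=0$. If $\alpha_{n-1}=0$ as well, then $a_1=a_2=c_2=0,\ c_3\ne0$ satisfies every relation of \eqref{eq0}, giving a non-nilpotent pre-derivation: this is family iii), with the odd parameters entirely free. If $\alpha_{n-1}\ne0$, then $c_2\alpha_{n-1}=0$ forces $c_2=0$, the final equation forces $c_3=2a_2-(n-6)a_1$, and evaluating the recursion at the least index $2s+1$ with $\alpha_{2s+1}\ne0$ gives $(a_2-(2s-2)a_1)\alpha_{2s+1}=0$, hence $a_2=(2s-2)a_1$ and $a_1\ne0$ is needed for $P$ to be non-nilpotent; the recursion then becomes $(2s+1-k)\alpha_k=(k-1)(s-1)\sum_t\alpha_{t-1}\alpha_{k-t+4}$, which — by induction on the reachable indices together with \eqref{E:comb} applied with $p=2s-1$ — forces $\alpha_j=0$ for $j\ne(2s-2)t+3$ and $\alpha_{(2s-2)t+3}=(-1)^{t+1}C_t^{2s-1}\alpha_{2s+1}^{t}$, with $\alpha_{n-1},\alpha_n,\theta$ still free; this is family iv) (the degenerate sub-case $\alpha_{2s+1}=0$, in which all odd $\alpha_j$ in $[4,n-2]$ vanish while $\alpha_{n-1}$ may be nonzero, is also captured by iv)). Conversely, for parameters outside i)--iv) every solution of these relations has $a_1=a_2=c_2=c_3=0$, so $P$ is nilpotent and $L$ is strongly nilpotent.

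The main obstacle is the same combinatorial bookkeeping as in the even case, now heavier because of the extra branching in the last case: the delicate points will be (a) the induction that upgrades the bilinear recursion on the odd parameters into the closed $(2s-1)$-Catalan form, which requires invoking \eqref{E:comb} at every step and carefully identifying the set $\{(2s-2)t+3\}$ of reachable indices while showing all other $\alpha_j$ are forced to zero; and (b) verifying that the values $a_2=(2s-2)a_1$, $c_3=2a_2-(n-6)a_1$ dictated by the final equation are simultaneously compatible with all the earlier relations and with $a_1\ne0$, so that no spurious freedom is introduced. The converse direction (strong nilpotency outside i)--iv)) is routine once the forward case analysis is fully organised.
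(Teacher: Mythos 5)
Your proposal follows essentially the same route as the paper's proof: the same odd-$n$ specialisation of the relations from Proposition~\ref{prop1}, the same case split on the first non-vanishing even-indexed parameter and then on $\alpha_{n-1}$, and the same use of the Catalan identity \eqref{E:comb} to close the bilinear recursion into families i)--iv). The only differences are cosmetic: you treat $\alpha_4\neq 0$ separately instead of as the $s=2$ instance of the general even-index case, and your sign $c_3=2a_2-(n-6)a_1$ is the correct reading of the relation $(2a_2-c_3-(n-6)a_1)\alpha_{n-1}=0$ (the paper's text writes a $+$ there).
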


\begin{proof} From Proposition \ref{prop1}
we have

\begin{equation}\label{eq4.5}
\left\{\begin{aligned}
& (a_1-a_2)\alpha_4=0,\qquad \qquad  (3a_1-c_3)\alpha_4=0,\\
& \sum\limits_{t=3}^{k}(a_{2k-2t+3}-c_{2k-2t+4}+a_2\alpha_{2k-2t+4})\alpha_{2t-2}=0, \qquad \qquad \qquad \qquad \qquad  \quad  3\leq k\leq \frac{n-1}{2},\\
&(2a_1+a_2-c_3)\alpha_{2k}+\sum\limits_{t=3}^{k}(a_{2k-2t+4}-c_{2k-2t+5}+a_2\alpha_{2k-2t+5})\alpha_{2t-2}=0, \qquad 3\leq k\leq \frac{n-3}{2},\\
&(a_2-(k-3)a_1)\alpha_{k}=\frac{k-1}{2}a_2\sum\limits_{t=5}^{k}\alpha_{t-1}\alpha_{k-t+4}, \qquad \qquad \qquad \qquad \qquad \qquad \quad   5\leq k\leq n-2,\\
& (2a_2-c_3-(n-6)a_1)\alpha_{n-1}=a_2\sum\limits_{t=3}^{\frac{n-1}{2}}(2t-3)\alpha_{n-2t+3}\alpha_{2t-1}\\
& \qquad \qquad \qquad \qquad \qquad \qquad \quad  +\sum\limits_{t=2}^{\frac{n-3}{2}}(c_{n-2t+2}-a_{n-2t+1}+(2t-3)a_2\alpha_{n-2t+2})\alpha_{2t}
\end{aligned}\right.
\end{equation}
\quad\quad\quad\quad\qquad\qquad

\textbf{Case 1.}  $\alpha_{2s} \neq 0$ for some $s \ (2 \leq s \leq
\frac {n-3} 2)$ and  $\alpha_{2i} = 0$ for $2 \leq i \leq s-1$.
 Then similarly to the proof of Theorem \ref{thm41}  we get
\[a_2=(2s-3) a_1,\qquad c_3 = (2s-1) a_1, \qquad c_i = a_{i-1} + a_1 \alpha_i, \qquad 4 \leq i \leq n-2s+1.\]

 Consequently
we have
\[\alpha_{k}=\frac{(k-1)(2s-3)}{2(2s-k)}\sum\limits_{t=5}^{k}\alpha_{t-1}\alpha_{k-t+4},\quad  5\leq k\leq n-2.\]

Applying formula \eqref{E:comb} inductively on $t$ we obtain
\[\alpha_{(2s-3)t+3} = (-1)^{t+1}C_{t}^{2s-2}\alpha_{2s}^{t}, \qquad 1 \leq t \leq \left\lfloor\frac {n-5} {2s-3}\right\rfloor,\]
\[\alpha_j = 0, \quad j \neq (2s-3)t+3, \quad  4 \leq j \leq n-2.\]

From the last equality of \eqref{eq4.5} we have
\[c_{n-2s+2} = \frac{1}{\alpha_{2s}}\Big((2s+1-n)a_1\alpha_{n-1} - (2s-3)a_1\sum\limits_{t=2s+1}^{n-2s+2}(t-2)\alpha_{n-t+2}\alpha_{t} \Big) +a_{n-2s+1} -
(2s-3)^2a_1\alpha_{n-2s}.\]

Thus, we have the cases \textit{i}) and \textit{ii}).

\textbf{Case 2.} Let $\alpha_{2i}=0$ for all $i \ (2 \leq i \leq
\frac {n-3} 2)$. Then the first four equalities of \eqref{eq4.5}
hold and from the last two equalities we have
\begin{equation}\label{eq.forodd}
\begin{aligned}
&\alpha_5(a_2-2a_1) =0, \qquad \alpha_{2s+1}(a_2-(2s-2)a_1)
=sa_2\sum\limits_{t=3}^s\alpha_{2t-1}\alpha_{2s+5-2t}, \quad 3
\leq s \leq \frac {n-3} 2. \\
&(2a_2-c_3-(n-6)a_1)\alpha_{n-1}=0.
\end{aligned}
\end{equation}

If $\alpha_{n-1}=0$, then taking $a_1=a_2=0$ and $c_3 \neq 0$, we
have a non-nilpotent pre-derivation for any values of
$\alpha_{2i+1}$, and so we have the case \textit{iii}).

If $\alpha_{n-1}\neq0$, then $c_3  = 2a_2+(n-6)a_1$ and we obtain
that

 non-nilpotency of pre-derivations depends of the parameters
$\alpha_{2i+1}$.

Let $\alpha_{2s+1}$ be the first non-vanishing parameter among
$\{\alpha_5,\alpha_7, \dots, \alpha_{n-4},\alpha_{n-2}\}$. Then we
get \[a_2 = (2s-2)a_1.\]

Applying formula \eqref{E:comb} inductively on $t$ from
\eqref{eq.forodd} we obtain
\[\alpha_{(2s-2)t+3} = (-1)^{t+1}C_{t}^{2s-2}\alpha_{2s}^{t}, \qquad 1 \leq t \leq \left\lfloor\frac {n-5} {2s-3}\right\rfloor\]
 and
\[\alpha_j = 0, \qquad j \neq (2s-2)t+3, \quad 4 \leq j \leq n-2.\]

Therefore, we have the case \textit{iv}).
\end{proof}

Now we give the classification of non-strongly nilpotent filiform
Leibniz algebras from the second class.

\begin{prop}  Let $L(\beta_4, \beta_5, \dots, \beta_n, \gamma)$ be a filiform Leibniz algebra of the second family. If
$\beta_4 =  \beta_5 =  \dots =\beta_{n-1} =0$, then $L$ is
non-strongly nilpotent.
\end{prop}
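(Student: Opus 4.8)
The plan is to argue exactly as in Proposition~\ref{prop41}, now using the description of the pre-derivations of the second family supplied by Proposition~\ref{prop32}. According to that proposition, a linear map $P$ of the displayed shape is a pre-derivation of $L(\beta_4,\dots,\beta_n,\gamma)$ precisely when the coefficients $a_1,a_2,b_2,c_3$ (together with the other free coefficients $a_t$, $b_{n-1}$, $b_n$, $c_t$) satisfy the system~\eqref{eq3.8}. So it suffices to show that, under the hypothesis $\beta_4=\beta_5=\dots=\beta_{n-1}=0$, the system~\eqref{eq3.8} becomes an identity, and then to point to one explicit non-nilpotent solution.

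First I would substitute $\beta_4=\dots=\beta_{n-1}=0$ into each line of~\eqref{eq3.8}. In every equation except the last two, each monomial on both sides carries a factor $\beta_j$, $\beta_{2t-2}$, $\beta_{2k}$, $\beta_{k-t+4}$ or $\beta_t$ whose index ranges over $\{4,\dots,n-1\}$, so both sides vanish. For the two remaining equations — the ones carrying a factor $\beta_{n-1}$ on the left — I would check the summation ranges on the right: every $\beta$ that occurs there has index in $\{4,\dots,n-2\}$ (in particular $\beta_n$ and $\gamma$ never appear), so those right-hand sides also vanish, and the left-hand sides vanish because $\beta_{n-1}=0$. Hence~\eqref{eq3.8} imposes no restriction whatsoever on $a_1,a_2,b_2,c_3$ (nor on any of the remaining free coefficients).

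Next I would exhibit a non-nilpotent pre-derivation. Taking $a_1=1$ and all the other defining coefficients equal to $0$, the formulas of Proposition~\ref{prop32} collapse to $P(e_1)=e_1$, $P(e_2)=0$, $P(e_3)=0$, $P(e_{2i})=(2i-1)e_{2i}$ for $2\le i\le\lfloor n/2\rfloor$ and $P(e_{2i+1})=(2i-2)e_{2i+1}$ for $2\le i\le\lfloor (n-1)/2\rfloor$. By the previous paragraph these coefficients satisfy~\eqref{eq3.8}, so $P$ is a pre-derivation of $L$; and since $P$ acts on $e_4$ by the scalar $3\ne 0$, no power of $P$ vanishes, i.e. $P$ is not nilpotent. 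Therefore $L$ admits a non-nilpotent pre-derivation, which by definition means $L$ is non-strongly nilpotent.

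There is essentially no genuine obstacle: the content is already packaged in Proposition~\ref{prop32}, and the statement is the exact analogue of Proposition~\ref{prop41} for the second family. The only point that needs a moment's care is the index bookkeeping in the last two lines of~\eqref{eq3.8}, where one must confirm that setting $\beta_4=\dots=\beta_{n-1}=0$ annihilates every term of the right-hand side and not merely the factor $\beta_{n-1}$ on the left.
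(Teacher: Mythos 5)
Your proof is correct and follows exactly the route the paper intends: the paper's own proof is just ``analogously to Proposition~\ref{prop41},'' i.e.\ observe that with $\beta_4=\dots=\beta_{n-1}=0$ the system \eqref{eq3.8} is vacuous and then read off a non-nilpotent pre-derivation from Proposition~\ref{prop32}. Your explicit choice $a_1=1$ (giving $P(e_4)=3e_4$) is a clean instance of the non-nilpotent pre-derivation the paper merely asserts to exist.
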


\begin{proof} Analogously to the proof of Proposition \ref{prop41}.
\end{proof}

It is obvious that any algebra from the family $F_2(0,\dots, 0,
\beta_n, \gamma)$ is isomorphic to one of the algebras
$F_2(0,\dots, 0, 0, 0)$, $ F_2(0,\dots, 0, 1, 0)$, $F_2(0,\dots,
0, 0, 1)$. Note that these algebras are non-characteristically
nilpotent (see \cite{KhLO}).

Now we consider the case of $\beta_i\neq 0$ for some $i \ (4 \leq i
\leq n-1)$. Then from \eqref{eq0} we have that $c_2=0$.

\begin{thm}\label{SC.n.even} Let $L$ be a $n$-dimensional complex  non-strongly nilpotent filiform Leibniz algebra from the family
$F_2(\beta_4,\ldots,\beta_n,\gamma)$ and $n$  even. Then $L$ is
isomorphic to one of the following algebras:
\begin{align*}
& F_2^{2s}(0,\dots, 0, \beta_{2s}, 0 \dots, 0 ,
\beta_{n-1},\beta_n,\gamma), \qquad \beta_{2s}=1, \quad 2 \leq s
\leq \frac{n-2} 2,\\
& F_2(0, \beta_5, 0, \beta_7, 0 , \dots, 0, \beta_{n-1}, \beta_n,
\gamma).
\end{align*}
\end{thm}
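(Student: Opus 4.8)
The plan is to mimic the structure of Theorems~\ref{thm41} and~\ref{thm42}, but now using the restrictions \eqref{eq3.8} coming from Proposition~\ref{prop32} and combining them with the known isomorphism criterion for the second family from \cite{GoOm}. First I would set up the dichotomy on the parameters $\beta_4,\dots,\beta_{n-1}$: since $\beta_i\neq 0$ for some $i$ with $4\le i\le n-1$ forces $c_2=0$, the remaining free parameters controlling whether a non-nilpotent pre-derivation exists are $a_1,a_2,b_2,c_3$ (together with the higher $a_t,c_t$ which turn out to be determined). The algebra is non-strongly nilpotent precisely when \eqref{eq3.8} has a solution with $(a_1,a_2,b_2,c_3)\neq(0,0,0,0)$. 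I would split into the case where the first non-vanishing $\beta$ among $\beta_4,\dots,\beta_{n-1}$ has even index $2s$, and the case where all even-indexed $\beta_{2i}$ vanish.

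In the even-index case, the first two relations of \eqref{eq3.8} (in the form $(c_3-2a_1)\beta_{2s}=0$, $(b_2-(2s-2)a_1)\beta_{2s}=0$ after the reduction $\alpha\leadsto\beta$ with the appropriate index shift, exactly as in Case~1 of Theorem~\ref{thm41}) pin down $c_3$ and $b_2$ in terms of $a_1$, and the recursive relations $(b_2-(k-2)a_1)\beta_k=\tfrac{k-1}{2}a_2\sum\beta_{t-1}\beta_{k-t+4}$ together with identity \eqref{E:comb} force all the intermediate $\beta_j$ ($j\neq$ the arithmetic-progression values) to vanish and give the Catalan-number formula for the rest; one then checks $a_1\neq0$ is needed, and $\beta_{n-1},\beta_n,\gamma$ stay free. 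Invoking the isomorphism criterion of \cite{GoOm}, any such algebra with leading non-zero coefficient $\beta_{2s}$ can be scaled so that $\beta_{2s}=1$, which yields the first listed normal form $F_2^{2s}(0,\dots,0,\beta_{2s},0,\dots,0,\beta_{n-1},\beta_n,\gamma)$ with $\beta_{2s}=1$. In the complementary case, all $\beta_{2i}=0$, and then — as in Case~3 of Theorem~\ref{thm41} — one can take $a_1=a_2=0$, $c_3\neq0$ (so $b_2$ arbitrary), and the surviving restrictions in \eqref{eq3.8} hold automatically for any odd-indexed $\beta_{2i+1}$; this produces a non-nilpotent pre-derivation, so every such algebra is non-strongly nilpotent, giving the second normal form $F_2(0,\beta_5,0,\beta_7,0,\dots,0,\beta_{n-1},\beta_n,\gamma)$.

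Finally I would verify that these two families are genuinely the full list: one must rule out the ``mixed'' situation where the first non-vanishing $\beta$ has odd index $2s+1<n-1$ but some later even-indexed $\beta_{2i}$ is nonzero, by showing the recursion \eqref{eq3.8} then forces that even coefficient back to zero unless the pre-derivation degenerates to the nilpotent one; this is the same inductive Catalan-number argument used for the odd-$n$ case in Theorem~\ref{thm42}, applied with $\beta$ in place of $\alpha$. The main obstacle I anticipate is bookkeeping: correctly translating the index conventions of \eqref{eq0} into those of \eqref{eq3.8} (the second family's relations are shifted relative to the first), and carefully invoking the \cite{GoOm} isomorphism theorem to reduce $\beta_{2s}$ to $1$ while checking that this scaling does not destroy the ``all lower $\beta$ vanish'' structure. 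The rest is a routine repetition of the arguments already carried out in Theorems~\ref{thm41} and~\ref{thm42}.
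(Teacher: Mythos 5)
Your case division (first non-vanishing even-indexed $\beta_{2s}$ versus all even-indexed $\beta$'s zero) and the appeal to Proposition~\ref{prop32} match the paper, but the central step that actually produces the stated normal forms is missing. In the second family the system \eqref{eq3.8} contains no analogue of the first-family relation $(a_1-a_2)\alpha_4=0$: the parameter $a_2$ is completely free, so the recursion $(b_2-(k-2)a_1)\beta_k=\frac{k-1}{2}a_2\sum\beta_{t-1}\beta_{k-t+4}$ does not yield the fixed Catalan coefficients of Theorem~\ref{thm41}; it only gives $\beta_{4s-3}=-\frac{(2s-2)a_2}{(2s-3)a_1}\beta_{2s}^2$ and a tail depending on the free ratio $a_2/a_1$. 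Consequently, merely rescaling $\beta_{2s}$ to $1$ (the $D$-parameter in the isomorphism criterion of \cite{GoOm}) cannot produce the normal form $F_2^{2s}(0,\dots,0,1,0,\dots,0,\beta_{n-1},\beta_n,\gamma)$, in which \emph{all} intermediate coefficients vanish: if $a_2\neq0$ the Catalan-type tail survives the rescaling. The paper's proof uses the second parameter $B$ of the isomorphism criterion to normalize $\beta_{4s-3}=0$ in addition to $\beta_{2s}=1$; the relation $\beta_{4s-3}=-\frac{(2s-2)a_2}{(2s-3)a_1}\beta_{2s}^2$ then forces $a_2=0$, and the recursion collapses the entire tail, $\beta_k=0$ for $2s+1\le k\le n-2$. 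Without this step your argument only shows that $L$ is isomorphic to an algebra with $\beta_{2s}=1$ and an undetermined one-parameter tail, which is strictly weaker than the theorem.

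The remaining parts of your sketch are essentially right: when all even-indexed $\beta$'s vanish, taking $a_1=a_2=0$ and $c_3\neq0$ satisfies \eqref{eq3.8} for arbitrary odd-indexed $\beta$'s, giving the second normal form; and your ``mixed'' configurations are excluded because, once some $\beta_{2s}\neq0$ with all lower even-indexed coefficients zero, non-strong nilpotency forces $a_1\neq0$ (since $b_2$ and $c_3$ are then multiples of $a_1$) and the recursion forces $\beta_5=\dots=\beta_{2s-1}=0$. But you must repair the normalization step above --- and not import the first-family constraint $a_2=a_1$, which simply is not present in \eqref{eq3.8} --- before the classification is complete.
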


\begin{proof}
 From Proposition \ref{prop32} we have:

\begin{equation}\label{eq4.8}
 \left\{\begin{array}{ll}
(c_3-2a_1)\beta_4=0, \quad (b_2-2a_1)\beta_4=0,& \\[1mm]
\sum\limits_{t=3}^{k}(a_{2k-2t+3}-c_{2k-2t+4}+a_2\beta_{2k-2t+4})\beta_{2t-2}=0, & 3\leq k\leq \frac{n-2}{2},\\[1mm]
(c_3-2a_1)\beta_{2k}=\sum\limits_{t=3}^{k}(a_{2k-2t+4}-c_{2k-2t+5}+a_2\beta_{2k-2t+5})\beta_{2t-2},
& 3\leq
k\leq \frac{n-2}{2},\\[1mm]
(b_2-(k-2)a_1)\beta_{k}=\frac{k-1}{2}a_2\sum\limits_{t=5}^{k}\beta_{t-1}\beta_{k-t+4}, &  5\leq k\leq n-2,\\[1mm]
(b_2-(n-3)a_1)\beta_{n-1}=a_2\sum\limits_{t=3}^{\frac{n-2}{2}}(2t-3)\beta_{n-2t+3}\beta_{2t-1}\\[1mm]
\qquad\qquad\qquad\qquad\qquad \ \ +
\sum\limits_{t=2}^{\frac{n-2}{2}}(c_{n-2t+2}-a_{n-2t+1}+(2t-3)a_2\beta_{n-2t+2})\beta_{2t},
\end{array}\right.
\end{equation}

\textbf{Case 1.} Let $\beta_4\neq 0$, then from \eqref{eq4.8} we
have
\begin{align*}
 c_3&=b_2=2a_1, \qquad c_{i} =a_{i-1}+a_2\beta_i,\qquad \quad  4 \leq i \leq n-3,\\
(4-k)a_1\beta_{k}&=\frac{k-1}{2}a_2\sum\limits_{t=5}^{k}\beta_{t-1}\beta_{k-t+4},\qquad \qquad \qquad  5\leq k\leq n-2,\\
(5-n)a_1\beta_{n-1}&=(c_{n-2}-a_{n-3}+a_2\beta_{n-2})\beta_{4}+a_2\sum\limits_{t=5}^{n-2}(t-2)\beta_{n-t+2}\beta_t.
\end{align*}

Since $L$ is non-strongly nilpotent, we get $a_1\neq0$ and
\[
\beta_5=-\frac{2a_2}{a_1}\beta_4^2, \qquad
\beta_{k}=\frac{(k-1)a_2}{2(4-k)a_1}\sum\limits_{t=5}^{k}\beta_{t-1}\beta_{k-t+4},\quad
6\leq k\leq n-2.
\]

From the isomorphic criterion which given in [10, Theorem 4.4] we
have that if two algebras from the family
$F_2(\beta_4,\ldots,\beta_n,\gamma)$ are isomorphic,  then there
exist $A,B,D \in \mathbb{C}$, such that
\[\beta_4'=\frac{D}{A^2}\beta_4,\qquad \beta_5'=\frac{D}{A^3}(\beta_5-\frac{2B}{A}\beta_4^2),\]
where $\beta_i$ and $\beta_i'$ are parameters of the first and
second algebras, respectively.

Putting $D=\frac{A^2}{\beta_4}$ and
$B=\frac{A\beta_5}{2\beta_4^2}$ we obtain $\beta_4'=1$,
$\beta_5'=0$. Therefore, we have shown that, if $L$ is
non-strongly nilpotent algebra from the family
$F_2(\beta_4,\ldots,\beta_n,\gamma)$, with $\beta_4 \neq 0$, then
we may always suppose
\[\beta_4=1, \qquad \beta_5=0.\]

Moreover, from  $\beta_5=-\frac{2a_2}{a_1}\beta_4^2$, we obtain
$a_2=0$, which implies $\beta_k=0$ for $6\leq k\leq n-2$ and
\[c_{n-2} = a_{n-3} + \frac{(5-n)a_1\beta_{n-1}}{\beta_4}.\]

Thus, $L$ isomorphic to the algebra
$F_2(1,0,\ldots,0,\beta_{n-1},\beta_n,\gamma)$.

\textbf{Case 2.} Let $\beta_{2s}\neq 0$ for some $s \ (3 \leq s \leq
\frac{n-2} 2)$ and $\beta_{2i}=0$ for $2 \leq i \leq s-1$. Then we
have
\begin{align*}
b_2&=(2s-2)a_1, \quad c_3 = 2a_1, \quad c_{i} =a_{i-1}+a_2\beta_i,\quad  4 \leq i \leq n-2s+1,\\
(2s-k)a_1\beta_{k}& =\frac{k-1}{2}a_2\sum\limits_{t=5}^{k}\beta_{t-1}\beta_{k-t+4},\qquad \qquad \qquad \qquad  \quad  5\leq k\leq n-2,\\
(2s+1-n)a_1\beta_{n-1}&=(c_{n-2s+2}-a_{n-2s+1}+a_2\beta_{n-2s+2})\beta_{2s}+a_2\sum\limits_{t=5}^{n-2}(t-2)\beta_{n-t+2}\beta_t.
\end{align*}

Since $L$ is non-strongly nilpotent, we get $a_1\neq0$, which
implies $\beta_5=\dots=\beta_{2s-1}=0$. Moreover, we have
 \[\beta_{4s-3} =
-\frac{(2s-2)a_2}{(2s-3)a_1}\beta_{2s}^2.\]

From the isomorphic criterion of filiform Leibniz algebras of the
second class, we obtain
\[\beta_{2s}'=\frac{D}{A^{2s-2}}\beta_{2s},\qquad \beta_{4s-3}'=\frac{D}{A^{4s-2}}\left(\beta_{4s-3}-\frac{(2s-2)B}{A}\beta_{2s}^2\right).\]

Putting $D=\frac{A^{2s-2}}{\beta_{2s}}$ and
$B=\frac{A\beta_{4s-3}}{(2s-2)\beta_{2s}^2}$, we have
$\beta_{2s}'=1$, $\beta_{4s-3}'=0$. Therefore, we have shown that,
if $L$ is non-strongly nilpotent algebra from the family
$F_2(\beta_4,\ldots,\beta_n,\gamma)$, with $\beta_{2s} \neq 0$ and
$\beta_{2i} = 0$ for $2 \leq i \leq s-1$, then we may always
suppose
\[\beta_{2s}=1, \quad \beta_{4s-3}=0.\]

Moreover, from  $\beta_{4s-3} =
-\frac{(2s-2)a_2}{(2s-3)a_1}\beta_{2s}^2$, we obtain $a_2=0$,
which implies $\beta_k=0$ for $2s+1\leq k\leq n-2$ and
\[c_{n-2s+2} = a_{n-2s+1} + \frac{(2s+1-n)a_1\beta_{n-1}}{\beta_{2s}}.\]

Thus, $L$ isomorphic to the algebra \[F_2^{2s}(0,\dots, 0,
\beta_{2s}, 0 \dots, 0 , \beta_{n-1},\beta_n,\gamma), \quad
\beta_{2s}=1.\]

\textbf{Case 3.} Let $\beta_{2s}= 0$ for $2 \leq s \leq \frac
{n-2} 2$, then we have
\begin{align*}
(b_2-(k-2)a_1)\beta_{k}& =\frac{k-1}{2}a_2\sum\limits_{t=5}^{k}\beta_{t-1}\beta_{k-t+4}, &&  5\leq k\leq n-2,\\
(b_2-(n-3)a_1)\beta_{n-1}& =a_2\sum\limits_{t=3}^{\frac{n-2}{2}}(2t-3)\beta_{n-2t+3}\beta_{2t-1}.
\end{align*}

Taking $a_1=a_2=0$ and $c_3\neq 0$, we have that previous
equalities are hold for any values of $\beta_{2s+1}$. Since
$c_3\neq 0$, this algebra is non-strongly nilpotent. Therefore, we
obtain the algebra $F_2(0, \beta_5, 0, \beta_7, 0 , \dots, 0,
\beta_{n-1}, \beta_n, \gamma)$.
\end{proof}

\begin{thm} Let $L$ be a $n$-dimensional complex  non-strongly nilpotent filiform Leibniz algebra from the family
$F_2(\beta_4,\ldots,\beta_n,\gamma)$ and $n$ odd. Then $L$ is
isomorphic to one of the following algebras:
\begin{align*}
&F_2^{j}(0,\dots, 0, \beta_{j}, 0 \dots, 0 ,
\beta_{n-1},\beta_n,\gamma), \qquad \beta_{j}=1, \quad 4 \leq j
\leq n-2,\\
& F_2(0, \beta_5, 0, \beta_7, 0 , \dots, 0, \beta_{n-2},0,  \beta_n, \gamma).
\end{align*}
\end{thm}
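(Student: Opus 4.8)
The plan is to transcribe the proof of Theorem~\ref{SC.n.even}, feeding in the restrictions \eqref{eq3.8} for odd $n$ in place of the even ones, and keeping track of the two structural differences: for odd $n$ the ``interior'' restrictions of \eqref{eq3.8} pin $c_3$ and $b_2$ only through the even-indexed parameters $\beta_4,\beta_6,\dots,\beta_{n-3}$, while $\beta_{n-1}$ is an ``even'' parameter governed solely by the last equation of \eqref{eq3.8}. Since by hypothesis some $\beta_i\neq0$ with $4\le i\le n-1$, we already have $c_2=0$; and from the explicit formulas of Proposition~\ref{prop32} one checks, exactly as in the even case, that if $a_1=b_2=c_3=0$ then $P(V_k)\subseteq V_{k+1}$ for the filtration $V_k=\langle e_k,\dots,e_n\rangle$, so $P$ is nilpotent. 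Consequently, in every case in which $c_3$ and $b_2$ turn out to be multiples of $a_1$, non-strong nilpotency forces $a_1\neq0$.

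I would then run three cases. \textbf{Case 1:} $\beta_4\neq0$. This is word-for-word Case~1 of Theorem~\ref{SC.n.even}: the first line of \eqref{eq3.8} gives $c_3=b_2=2a_1$, the next two lines give $c_i=a_{i-1}+a_2\beta_i$, and the fourth line yields the recursion $(4-k)a_1\beta_k=\tfrac{k-1}{2}a_2\sum_{t=5}^{k}\beta_{t-1}\beta_{k-t+4}$; then $a_1\neq0$, $\beta_5=-\tfrac{2a_2}{a_1}\beta_4^2$, and the isomorphism criterion used there normalizes $\beta_4=1$, $\beta_5=0$, forcing $a_2=0$ and $\beta_6=\dots=\beta_{n-2}=0$, with $c_{n-2}$ pinned by the last equation; thus $L\cong F_2^{4}(0,\dots,0,\beta_{n-1},\beta_n,\gamma)$. \textbf{Case 2:} $\beta_{2s}\neq0$ is the first non-vanishing parameter among $\beta_4,\beta_6,\dots,\beta_{n-3}$, with $3\leq s\leq\tfrac{n-3}{2}$. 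The third line of \eqref{eq3.8} at $k=s$ has vanishing right-hand side, so $c_3=2a_1$; the fourth line at $k=2s$ gives $b_2=(2s-2)a_1$; hence $a_1\neq0$, the recursion produces $\beta_{4s-3}=-\tfrac{(2s-2)a_2}{(2s-3)a_1}\beta_{2s}^2$, I normalize $\beta_{2s}=1$, $\beta_{4s-3}=0$, obtain $a_2=0$ and $\beta_k=0$ for $2s<k\leq n-2$, and conclude $L\cong F_2^{2s}(0,\dots,0,\beta_{2s},0,\dots,0,\beta_{n-1},\beta_n,\gamma)$. Together with Case~1 this yields the members $F_2^{j}$ with $j$ even.

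\textbf{Case 3:} $\beta_4=\beta_6=\dots=\beta_{n-3}=0$. Now the first three lines of \eqref{eq3.8} are identities and, all even parameters below $\beta_{n-1}$ being absent, the last equation collapses to $(b_2-c_3-(n-5)a_1)\beta_{n-1}=0$. If $\beta_{n-1}=0$, then $c_3$ is free; choosing $a_1=a_2=0$, $c_3\neq0$ gives a non-nilpotent pre-derivation for arbitrary values of the odd parameters, so $L\cong F_2(0,\beta_5,0,\beta_7,0,\dots,0,\beta_{n-2},0,\beta_n,\gamma)$. If $\beta_{n-1}\neq0$ and some odd parameter is non-zero, let $\beta_m$ ($m$ odd, $5\leq m\leq n-2$) be the first such; the fourth line at $k=m$ forces $b_2=(m-2)a_1$, hence $c_3=(m-n+3)a_1$, so $a_1=0$ would again make $P$ nilpotent. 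With $a_1\neq0$ the recursion and normalization are exactly as in Case~2 (replacing $2s$ by $m$), giving $a_2=0$ and $\beta_k=0$ for all $k\neq m$ in $4\leq k\leq n-2$, hence $L\cong F_2^{m}(0,\dots,0,\beta_m,0,\dots,0,\beta_{n-1},\beta_n,\gamma)$ with $m$ odd. Cases~1--3 together cover every $j$ with $4\leq j\leq n-2$, and each algebra in the list carries the non-nilpotent pre-derivation produced along the way.

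The step I expect to be the real obstacle is the remaining sub-case of Case~3, namely $\beta_{n-1}\neq0$ with \emph{every} odd parameter vanishing: there $c_3$ and $b_2$ are again free, so $F_2(0,\dots,0,\beta_{n-1},\beta_n,\gamma)$ is non-strongly nilpotent, and one must check that it is isomorphic to a member of the stated list---i.e.\ that the isomorphism criterion of \cite{GoOm} absorbs $\beta_{n-1}$ into $\beta_n$ and $\gamma$---so that this case does not enlarge the classification. The rest is a routine re-run of the even-dimensional argument; the only genuinely new ingredients are the different shape of the boundary equation for odd $n$ and the consequent appearance of the odd indices $j$ in the family $F_2^{j}$.
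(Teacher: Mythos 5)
Your reconstruction is exactly what the paper intends: its entire proof of this theorem is the sentence ``Analogously to the proofs of Theorems \ref{thm42} and \ref{SC.n.even}'', so your Cases 1--3 (first nonvanishing even parameter $\beta_4$, first nonvanishing even parameter $\beta_{2s}$ with $s\ge 3$, all even parameters among $\beta_4,\dots,\beta_{n-3}$ vanishing) are the intended argument, and they are carried out correctly, including the observation that for odd $n$ the index $n-1$ is even so that $\beta_{n-1}$ is governed only by the boundary equation, which is what produces the odd values of $j$ in $F_2^{j}$.

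The sub-case you flag at the end is, however, a genuine loose end and you are right that it is the only real obstacle. If $\beta_4=\dots=\beta_{n-2}=0$ and $\beta_{n-1}\neq 0$, then every restriction in \eqref{eq3.8} except the last is vacuous and the last reduces to $b_2=c_3+(n-5)a_1$, so the algebra admits a non-nilpotent pre-derivation (e.g.\ $a_1=0$, $b_2=c_3\neq0$) and is non-strongly nilpotent. But it cannot be absorbed into the stated list: under the adapted base changes of \cite{GoOm} the new $\beta_{n-1}$ equals $\frac{D}{A^{n-3}}\beta_{n-1}$ plus corrections that are polynomial in the \emph{lower} $\beta$'s, so when those all vanish the index of the first nonzero parameter is an isomorphism invariant; hence $F_2(0,\dots,0,\beta_{n-1},\beta_n,\gamma)$ with $\beta_{n-1}\neq0$ is isomorphic neither to an $F_2^{j}$ with $j\le n-2$ nor to the second listed family (which has $\beta_{n-1}=0$). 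The correct resolution is not that the isomorphism criterion absorbs $\beta_{n-1}$ into $\beta_n$ and $\gamma$, but that this algebra is the missing member $F_2^{n-1}(0,\dots,0,1,\beta_n,\gamma)$; in other words the range in the theorem should read $4\le j\le n-1$ (or this algebra should be appended to the list). So your proof is complete once you replace the hoped-for absorption by this explicit additional isomorphism class; the defect lies in the statement you were asked to prove rather than in your argument.
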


\begin{proof}
Analogously to the proofs of Theorems \ref{thm42} and
\ref{SC.n.even}.
\end{proof}

Now, we consider a Leibniz algebra $L$ from the third family
$F_3(\theta_1,\theta_2,\theta_3)$. Note that $L$ is a parametric
algebra with parameters $\theta_1,\theta_2,\theta_3$ and
$\alpha_{i,j}^k$. The parameters $\alpha_{i,j}^k$ appear from the
multiplications $[e_i, e_j]$ for $\leq i < j \leq {n-1}$. In the
case of  $\theta_1 = \theta_2 = \theta_3 =0$, the algebra $L$ is a Lie
algebra.

In the next proposition we assert that the strongly nilpotency of
$L(\theta_1,\theta_2,\theta_3)$ is equivalent to the strongly
nilpotency of $L(0,0,0)$.

\begin{prop} An algebra $L(\theta_1,\theta_2,\theta_3)$ from the family $F_3(\theta_1,\theta_2,\theta_3)$ is strongly nilpotent if and only if the algebra
$L(0,0,0)$ is strongly nilpotent.
\end{prop}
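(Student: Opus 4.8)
The plan is to prove the sharper statement that the two algebras $L(\theta_1,\theta_2,\theta_3)$ and $L(0,0,0)$ carry the \emph{same} ternary operation $(x,y,z)\mapsto[[x,y],z]$. Since, by definition, a linear transformation is a pre-derivation of a Leibniz algebra precisely when it is a derivation with respect to this ternary operation, it will follow that $L(\theta_1,\theta_2,\theta_3)$ and $L(0,0,0)$ have literally the same set of pre-derivations, and hence that one is strongly nilpotent if and only if the other is (both algebras being nilpotent, since they are filiform, so that the notion applies).

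First I would isolate two elementary features of the presentation of $F_3(\theta_1,\theta_2,\theta_3)$. Since the algebra is filiform, $L^n=0$ and $e_n\in L^{n-1}$, so $[e_n,e_j]\in L^n=0$ for all $j$; that is, $e_n$ is a left annihilator of $L$. Secondly, reading off the table of multiplication, the parameters $\theta_1,\theta_2,\theta_3$ enter only through the three products $[e_1,e_1]=\theta_1e_n$, $[e_1,e_2]=-e_3+\theta_2e_n$ and $[e_2,e_2]=\theta_3e_n$; every product $[e_i,e_j]$ with $i\ge 3$, as well as every product $[e_i,e_{n+1-i}]$ (whose coefficient $\alpha$ is fixed independently of the $\theta$-parameters), is identical in $L(\theta_1,\theta_2,\theta_3)$ and in $L(0,0,0)$. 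Writing $[\,\cdot\,,\cdot\,]_\theta$ and $[\,\cdot\,,\cdot\,]_0$ for the brackets of these two algebras, these facts give, for arbitrary $x=\sum x_ie_i$ and $y=\sum y_ie_i$, that $[x,y]_\theta=[x,y]_0+(\theta_1x_1y_1+\theta_2x_1y_2+\theta_3x_2y_2)e_n$; moreover $[x,y]_0\in L^2\subseteq\langle e_3,\dots,e_n\rangle$, so $[x,y]_0$ has zero $e_1$- and $e_2$-components.

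The main computation is then to expand $[[x,y],z]$ in $L(\theta_1,\theta_2,\theta_3)$. Write $[x,y]_\theta=w+ce_n$ with $w:=[x,y]_0\in\langle e_3,\dots,e_n\rangle$ and $c\in F$. Using that $e_n$ is a left annihilator, $[[x,y],z]_\theta=[w,z]_\theta+c[e_n,z]_\theta=[w,z]_\theta$. Since $w$ is a linear combination of $e_3,\dots,e_n$ and the products $[e_i,e_j]$ with $i\ge 3$ do not involve $\theta_1,\theta_2,\theta_3$, we have $[w,z]_\theta=[w,z]_0$; and, $e_n$ being a left annihilator also in $L(0,0,0)$, $[w,z]_0=[[x,y],z]_0$. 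Hence $[[x,y],z]_\theta=[[x,y],z]_0$ for all $x,y,z\in L$, which is the assertion of the first paragraph, and the proposition follows.

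I expect the only point needing care to be the bookkeeping in the second step: one must check, against the multiplication table of $F_3$, that the deformation of the structure constants caused by $\theta_1,\theta_2,\theta_3$ affects only products taking values in the left annihilator $\langle e_n\rangle$, so that this deformation is invisible to the outer bracket in $[[x,y],z]$. Everything else is formal; in particular, the equality of the two sets of pre-derivations, combined with the fact that filiform Leibniz algebras are nilpotent, yields at once that every pre-derivation of $L(\theta_1,\theta_2,\theta_3)$ is nilpotent exactly when every pre-derivation of $L(0,0,0)$ is.
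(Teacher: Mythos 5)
Your proof is correct and follows essentially the same route as the paper: both rest on the observation that the parameters $\theta_1,\theta_2,\theta_3$ do not affect the ternary operation $(x,y,z)\mapsto[[x,y],z]$, so that $L(\theta_1,\theta_2,\theta_3)$ and $L(0,0,0)$ have the same space of pre-derivations and hence are strongly nilpotent simultaneously. If anything, your write-up is more careful than the paper's one-line justification, since you make explicit the step it leaves implicit: the inner bracket $[x,y]$ does change with the $\theta_i$, but only by a multiple of $e_n$, and this change is invisible to the outer bracket because $e_n\in L^{n-1}$ is a left annihilator.
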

\begin{proof} Note that the parameters $\theta_1,\theta_2,\theta_3$ appear only in the multiplications $[e_1,e_1], [e_1,e_2], [e_2,e_2]$.
Since $[P(x),y],\ [x,P(y)], \ [x,y]\in L^2$ and $e_1,e_2\not \in
L^2$ parameters $\theta_i$ do not take part in the identity
\[P([[x,y],z])=[[P(x),y],z]+[[x,P(y)],z]+[[x,y],P(z)],\quad x,y,z \in L.\]

Therefore, the spaces of pre-derivations of algebras
$L(\theta_1,\theta_2,\theta_3)$ and $L(0,0,0)$  coincide.
\end{proof}

\textbf{Acknowledgments.} This work was supported by Agencia Estatal de Investigaci\'on (Spain), grant MTM2016-79661-P (European FEDER support included, UE).
 The second named author was supported
by IMU/CDC Grant (Abel Visiting Scholar Program), and he would
like to acknowledge the hospitality of the University of Santiago
de Compostela (Spain).

\end{document}